\documentclass[12pt]{amsart}

\usepackage{amsthm, amsmath, amscd, amsfonts}
\usepackage{cases}
\usepackage{nameref, hyperref, cleveref}
\usepackage{fancyhdr}
\usepackage{array}
\usepackage[normalem]{ulem}
\usepackage{caption}
\usepackage{subcaption}
\usepackage{graphicx}
\usepackage[usenames,dvipsnames]{xcolor}
\usepackage{tikz}
\usepackage[color=WildStrawberry]{todonotes}
\usepackage[letterpaper]{geometry}
\usepackage[parfill]{parskip} 
\parskip=12pt

\usetikzlibrary{shapes}


\newcommand{\R}{{\mathbb{R}}}
\newcommand{\Z}{\mathbb{Z}}
\newcommand{\N}{\mathbb{N}}

\newcommand{\lcm}{\textup{lcm}}

\newcommand{\y}{\mathbf{y}}

\newcommand{\G}{\mathcal{G}}
\newcommand{\HH}{\mathcal{H}}

\newcommand{\setn}{ \left\{ \G^{(C_n,L)} \right\} }

\newtheorem{theorem}{Theorem}[section]
\newtheorem{corollary}[theorem]{Corollary}
\newtheorem{lemma}[theorem]{Lemma}
\newtheorem{proposition}[theorem]{Proposition}

\newtheorem{definition}[theorem]{Definition}

\newtheorem*{CRT}{Chinese Remainder Theorem}

\definecolor{mygray}{cmyk}{0.1875,0,0.375,0.8745}
\definecolor{myraspberry}{cmyk}{0,1,0.15,0.3}
\definecolor{mysunshine}{cmyk}{0,0.104,0.542,0.0157}
\definecolor{mypurple}{cmyk}{0.324,0.743,0,0.488}

\tikzstyle{rectanglevertex}=
	[rounded corners=3pt,
	line width=1pt,
	draw=mygray,
	fill=black!5, 
	font=\fontsize{11}{11}]

\tikzstyle{vertex}=
	[rounded corners=3pt,
	line width=1pt,
	draw=mygray,
	fill=black!5, 
	font=\fontsize{11}{11}]
\tikzstyle{edge}=
	[line width=1.5pt,
	mygray]
\tikzstyle{edgelabel}=
	[font=\fontsize{11}{11}]

\tikzstyle{dashededge}=[line width=1.5pt,dotted]
\tikzstyle{dashedarrow}=[dashed, line width=2pt, ->]

\title{Integer Generalized Splines on Cycles}
\author[Handschy, Melnick, and Reinders]{Madeline Handschy, Julie Melnick, and Stephanie Reinders}


\begin{document}

\maketitle


\begin{abstract}
	Let $G$ be a graph whose edges are labeled by positive integers. Label each vertex with an integer and suppose if two vertices are joined by an edge, the vertex labels are congruent to each other modulo the edge label. The set of vertex labels satisfying this condition is called a generalized spline. Gilbert, Polster, and Tymoczko recently defined generalized splines based on work on polynomial splines by Billera, Rose, Haas, Goresky-Kottwitz-Machperson, and many others. We focus on generalized splines on $n$-cycles. We construct a particularly nice basis for the module of splines on $n$-cycles. As an application, we construct generalized splines on star graphs, wheel graphs, and complete graphs.
\end{abstract}

\section{Introduction}

Let $G$ be a graph whose edges are labeled by positive integers. Label each vertex with an integer so that if two vertices are joined by an edge, the vertex labels are congruent to each other modulo the edge label. If each vertex label satisfies this condition, the set of vertex labels is called a generalized spline. In this paper, we often refer to generalized splines simply as splines.  Figure \ref{Fig:TriSystem} is an example of a spline on a triangle and the system of congruences that it represents. 

\begin{figure}
\begin{tikzpicture}
	
	\begin{scope}
		\pgfmathsetmacro{\r}{1.25}

		\draw[edge] (-90:\r)--(0:\r);
		\draw[edge] (0:\r)--(90:\r);
		\draw[edge] (90:\r)--(-90:\r);		
	
		\node[edgelabel] at (-45:\r) {$\ell_1$};
		\node[edgelabel] at (45:\r) {$\ell_2$};
		\node[edgelabel] at (180:\r /4) {$\ell_3$};
		
		\node[vertex] at (-90:\r) {$g_1$};
		\node[vertex] at (0:\r) {$g_2$};
		\node[vertex] at (90:\r) {$g_3$};
	\end{scope}

	\begin{scope}[xshift=1.75in]
		\node at (0,0) {
			$\begin{cases}
				g_1 \equiv g_2 \bmod \ell_1 \\
				g_2 \equiv g_3 \bmod \ell_2 \\
				g_3 \equiv g_1 \bmod \ell_3 \\
			\end{cases}$};
	\end{scope}
\end{tikzpicture}
\caption{}
\label{Fig:TriSystem}
\end{figure}

Mathematicians adopted the term spline from engineering. Before the invention of computer-aided design, ship builders would construct models using lead weights and splines, by which they meant thin strips of wood or metal. They placed the weights at key points and connected them with splines. The splines would bend, creating smooth curves.  In various areas of mathematics, a smooth curve is created by piecing together polynomials so that  at the point where two polynomials meet their derivatives are equal. Mathematicians chose the term spline to refer to the piecewise polynomial functions used to create smooth curves. This method of constructing curves is used in data interpolation \cite{Kunis07, Schumaker11}, in computer design software to create smooth surfaces \cite{Marsh05, Salomon06}, and partial differential equations to find numerical solutions \cite{Gupta12, Mittal12}. 

Splines also have a rich history in homological and commutative algebra, as well as geometry and topology (although not necessarily under the same name). Goresky, Kottwitz and MacPherson \cite{GKM}, Schenck \cite{Schenck12}, Payne \cite{Payne06},  and others considered splines from a geometric and topological perspective because splines represent equivariant cohomology rings. Billera \cite{Billera88, Billera91-2}, Haas \cite{Haas91}, Rose \cite{Rose95, Rose04},  and others studied the homological and algebraic properties of splines; their work is closely related to our research. Billera defined splines as piecewise polynomials on a polyhedron. Billera and Rose \cite{Billera91-2} used dual graphs of polyhedra and had definitions similar to ours. Haas \cite{Haas91} and others looked for bases of splines. 

Adapting Billera, Haas and Rose's notion of splines, Gilbert, Polster and Tymoczko developed the following definition of generalized splines \cite{Gilbert}. The ring of generalized splines $R_G$ of the pair $(G,\alpha)$ is defined by a ring $R$, a graph $G$, and a function $\alpha: E(G) \to \{\text{ideals in } \R \}$ such that
\[ R_G = \{ p \in R^{|V|} : \text{ for each edge } e=uv, \text{ the difference } p_u-p_v \in \alpha(e) \}. \]
Our work builds directly off of this. We work in the ring of integers. Ideals are always principle in the integers so we simply label each edge with the generator of the ideal.

Cycles form a particularly important family of graphs and are one of the ``building blocks'' of spline theory. Rose studied cycles and proved that they can be used to generate syzygies of spline modules \cite{Rose95, Rose04}. Gilbert, Polster, and Tymoczko showed that the ring of splines for an arbitrary graph can be decomposed as an instersection of the ring of splines of a spanning tree and the ring of splines for various cycles \cite{Gilbert}.

In this paper we prove that given any edge-labeled $n$-cycle particularly nice splines called flow-up classes exist. We explicitly identify the smallest leading element of each flow-up class and define the {\it smallest} flow-up classes. We go on to prove that the smallest flow-up classes form a basis for the module of splines over the integers. A flow-up basis is analogous to an upper-trianglular basis and is especially interesting to geometers and topologists. This type of basis shows up naturally in Schubert classes. It also occurs when considering certain kinds of group actions (Morse flow in topology or Bialynicki-Birula's study of algebraic torus actions). The main difference between standard flow-up classes (like Schubert classes) used by Knutson and Tao \cite{Knutson}, Goldin and Tolman \cite{Goldin09}, and Tymoczko \cite{Tymoczko} and our flow-up classes is that Schubert classes require the leading nonzero entry to satisfy a specific condition based on the graph. When working with integers, we can find flow-up classes that don't satisfy this condition.

Combinatorists and geometers have long been interested in explicit formulas for the entries of flow-up bases. In the case of Schubert classes, this formula is often called Billey's Formula \cite{Billey} or Anderson-Jantzen-Soergel's formula \cite{Anderson}. We give a closed formula for the leading entry of the smallest flow-up basis in Theorem 4.5.

Indetifying splines depends upon finding a solution to a system of congruences. For that reason, the Chinese Remainder Theorem is a key component of the majority of our proofs. In fact, we will see that the star graph is itself a representation of the Chinese Remainder Theorem. 

In Section 2 we introduce definitions and notation foundational to splines. We also introduce flow-up classes, as well as the formulation of  the Chinese Remainder Theorem that is most useful in our context. In Section 3 we address the particular case of splines on triangles. We prove that flow-up classes exist and that the smallest flow-up classes form a basis for the module of splines over the integers. We generalize these proofs for $n$-cycles in Section 4. In Section 5 we identify necessary and sufficient conditions for splines to exist on star graphs and then apply that information to construct splines on wheel graphs and complete graphs.

\section{Preliminaries}

In this section we introduce notation and definitions, including the definition of flow-up classes. We also give examples of splines. 

\subsection{The Chinese Remainder Theorem}
We recall the Chinese Remainder Theorem, which is a key piece of many of our proofs because it determines when a solution to a system of congruences exists. 
\begin{CRT}
\label{CRT}
	Choose $g_1,g_2,...,g_n \in \Z$ and  $\ell_1,\ell_2,...,\ell_n\in \N$. There exists an integer $x$ solving
	\[ x \equiv g_i \bmod \ell_i\]
for each $i$ in $1 \leq i \leq n$ if and only if $g_i \equiv g_j \bmod \gcd(\ell_i, \ell_j)$ for all $1 \leq i,j \leq n$.  All solutions $x$ are congruent modulo the $\lcm(\ell_1,\ell_2,...,\ell_n)$.
\end{CRT}
In this paper, the integers $\ell_i$ will correspond to edge-labels of a graph and the integers $g_i$ will correspond to vertex labels. We construct splines by labeling one vertex at a time. The Chinese Remainder Theorem comes into play when we attempt to find an integer that satisfies the edge conditions at a specific vertex. Many modern algebra textbooks use a special case of the Chinese Remainder Theorem where the $\ell_i$ are pairwise relatively prime. We use the stronger, more general statement because it gives us a tool to also work with edge-labels that are not relatively prime.

\subsection{Foundations of Splines}
We will give definitions and propositions that are used throughout this work. To start, we formally define an edge-labeled graph.
\begin{definition}[Edge-Labeled Graphs]
	Let $G$ be a graph with $k$ edges ordered $e_1, e_2, \dots, e_k$ and $n$ vertices ordered $v_1,...,v_n$.   Let $\ell_i$ be a positive integer label on edge $e_i$ and let $L=\{\ell_1,...,\ell_k\}$ be the set of edge labels. Then $(G,L)$ is an edge-labeled graph.  
\end{definition}

With this definition, we are now able to define splines.
\begin{definition}[Generalized Splines]
	A generalized spline on the edge-labeled graph $(G,L)$ is a vertex labeling satisfying the following: if two vertices are connected by an edge $e_i$ then the labels on the two vertices are equivalent modulo the label on the edge. We denote a generalized spline $\G^{(G,L)}=(g_1,...,g_n)$ where $g_i$ is the label on vertex $v_i$ for $1 \leq i \leq n$. The set of generalized splines is denoted $\left\{ \G^{(G,L)}\right\}$. 
\end{definition}

Figure \ref{TriSquareStar} gives examples of splines on a triangle, a square and a star graph. Sections 3 and 4 focus specifically on cycles and Section 5 focuses on star graphs, wheel graphs, and complete graphs.

\begin{figure}
\begin{tikzpicture}
	
	\begin{scope}
		\pgfmathsetmacro{\r}{1.25}

		\draw[edge] (-90:\r)--(0:\r);
		\draw[edge] (0:\r)--(90:\r);
		\draw[edge] (90:\r)--(-90:\r);		
	
		\node[edgelabel] at (-45:\r) {$2$};
		\node[edgelabel] at (45:\r) {$3$};
		\node[edgelabel] at (180:\r /4) {$5$};
		
		\node[vertex] at (-90:\r) {$0$};
		\node[vertex] at (0:\r) {$2$};
		\node[vertex] at (90:\r) {$5$};
	\end{scope}

	\begin{scope}[xshift=2in]
		\pgfmathsetmacro{\r}{1.25}
		
		\draw[edge] (-90:\r)--(0:\r);
		\draw[edge] (0:\r)--(90:\r);
		\draw[edge] (90:\r)--(180:\r);
		\draw[edge] (180:\r) -- (-90:\r);

		\node[edgelabel] at (-45:\r) {$5$};
		\node[edgelabel] at (45:\r) {$2$};
		\node[edgelabel] at (135:\r) {$4$};
		\node[edgelabel] at (225:\r) {$8$};
	
		\node[vertex] at (-90:\r) {$1$};
		\node[vertex] at (0:\r) {$11$};
		\node[vertex] at (90:\r) {$13$};
		\node[vertex] at (180:\r) {$17$};
	\end{scope}

	\begin{scope}[xshift=4in]
		\pgfmathsetmacro{\r}{1.25}
		
		\draw[edge] (-90:\r) -- (90:\r);
		\draw[edge] (0:\r) -- (180:\r);
		
		\node[edgelabel,right] at (-90:\r /2) {$3$};
		\node[edgelabel,above] at (0:\r /2) {$7$};
		\node[edgelabel,left] at (90:\r /2) {$5$ };
		\node[edgelabel,below] at (180:\r /2) {$6$ };

		\node[vertex] at (0,0) {$10$ };
		\node[vertex] at (-90:\r) {$7$};
		\node[vertex] at (0:\r) {$3$};
		\node[vertex] at (90:\r) {$5$};
		\node[vertex] at (180:\r) {$4$};
	\end{scope}

\end{tikzpicture}
\caption{}
\label{TriSquareStar}
\end{figure}

	We adopted a standard convention for numbering the vertices and edges of $n$-cycles. Given an $n$-cycle, the vertex $v_1$ is the lowest vertex. From $v_1$ we number the other vertices counterclockwise. Then we number the edges as follows: 
\begin{enumerate} 
\item edge $e_i$ is the edge between $v_i$ and $v_{i+1}$ for $1 \leq i \leq n-1$ \\
\item  edge $e_n$ is the edge between $v_1$ and $v_n$. \\
\end{enumerate}

Gilbert, Polster, and Tymoczko showed that the set of splines always form a module over the integers with the standard componentwise addition and multiplication for $n$-tuples \cite[Theorem 2.12]{Gilbert}. 

\begin{figure}
\begin{tikzpicture}
	
	\begin{scope}
		\pgfmathsetmacro{\r}{1.25}

		\draw[edge] (-90:\r)--(0:\r);
		\draw[edge] (0:\r)--(90:\r);
		\draw[edge] (90:\r)--(-90:\r);		
	
		\node[edgelabel] at (-45:\r) {$5$};
		\node[edgelabel] at (45:\r) {$4$};
		\node[edgelabel] at (180:\r /4) {$6$};
		
		\node[vertex] at (-90:\r) {$0$};
		\node[vertex] at (0:\r) {$0$};
		\node[vertex] at (90:\r) {$12$};
	\end{scope}

	\begin{scope}[xshift=2in]
		\pgfmathsetmacro{\r}{1.25}

		\draw[edge] (-90:\r)--(0:\r);
		\draw[edge] (0:\r)--(90:\r);
		\draw[edge] (90:\r)--(-90:\r);		
	
		\node[edgelabel] at (-45:\r) {$6$};
		\node[edgelabel] at (45:\r) {$10$};
		\node[edgelabel] at (180:\r /4) {$15$};
		
		\node[vertex] at (-90:\r) {$0$};
		\node[vertex] at (0:\r) {$0$};
		\node[vertex] at (90:\r) {$30$};
	\end{scope}

\end{tikzpicture}
\caption{The leading entry of flow-up classes need not be the product of the edges below.}
\label{CanonicalClass}
\end{figure}
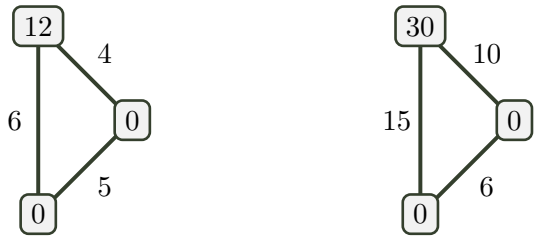

\begin{figure}
\begin{tikzpicture}
	
	\begin{scope}
		\pgfmathsetmacro{\r}{1.25}

		\draw[edge] (-90:\r)--(0:\r);
		\draw[edge] (0:\r)--(90:\r);
		\draw[edge] (90:\r)--(-90:\r);		
	
		\node[edgelabel] at (-45:\r) {$2$};
		\node[edgelabel] at (45:\r) {$3$};
		\node[edgelabel] at (180:\r /4) {$5$};
		
		\node[vertex] at (-90:\r) {$1$};
		\node[vertex] at (0:\r) {$1$};
		\node[vertex] at (90:\r) {$1$};
	\end{scope}

	\begin{scope}[xshift=2in]
		\pgfmathsetmacro{\r}{1.25}

		\draw[edge] (-90:\r)--(0:\r);
		\draw[edge] (0:\r)--(90:\r);
		\draw[edge] (90:\r)--(-90:\r);		
	
		\node[edgelabel] at (-45:\r) {$2$};
		\node[edgelabel] at (45:\r) {$3$};
		\node[edgelabel] at (180:\r /4) {$5$};
		
		\node[vertex] at (-90:\r) {$0$};
		\node[vertex] at (0:\r) {$2$};
		\node[vertex] at (90:\r) {$5$};
	\end{scope}

	\begin{scope}[xshift=4in]
		\pgfmathsetmacro{\r}{1.25}

		\draw[edge] (-90:\r)--(0:\r);
		\draw[edge] (0:\r)--(90:\r);
		\draw[edge] (90:\r)--(-90:\r);		
	
		\node[edgelabel] at (-45:\r) {$2$};
		\node[edgelabel] at (45:\r) {$3$};
		\node[edgelabel] at (180:\r /4) {$5$};
		
		\node[vertex] at (-90:\r) {$0$};
		\node[vertex] at (0:\r) {$0$};
		\node[vertex] at (90:\r) {$15$};
	\end{scope}

\end{tikzpicture}
\caption{Flow-up classes $\G_0,\G_1$ and $\G_2$ on a triangle}
\label{TriBasis}
\end{figure}

\subsection{Flow-Up Classes}

A flow-up class is a particularly nice type of spline. Given an edge-labeled graph $(C_n,L)$ a flow-up class $\G_k$ has $k$-leading zeroes. We will prove in Sections 3 and 4 that flow-up classes exist for each $0 \leq k < n$. These splines are particularly nice because they are linearly independent.

\begin{definition}[Flow-Up Classes]
	 Fix a cycle with edge labels $(C_n,L)$ and fix $k$ with $1\leq k<n$. A flow-up class $\G_k$ on $(C_n,L)$ is a spline with $k$ leading zeroes. The flow-up class $\G_0$ is a multiple of the trivial spline.
\end{definition}

Our concept of flow-up classes is an adaptation of  the canonical classes used by Tymoczko \cite{Tymoczko} and  Goldin and Tolman \cite{Goldin09} and the Schubert classes used by Knutson and Tao \cite{Knutson}. In their classes the leading entry on vertex $v_i$ is the product of the labels on the edges between $v_i$ and vertices lower than it. This is the case in our flow-up classes if the edge labels are relatively prime, but if the edge labels are not relatively prime we find leading entries that are not multiples of the labels on the edges below. Figure \ref{CanonicalClass} shows examples of this.

Figure \ref{TriBasis} shows the three flow-up classes $\G_0, \G_1$ and $\G_2$ on a triangle. By convention we build a flow-up class on an edge-labeled graph by starting with the bottom vertex and labeling the vertices counterclockwise. The $k$ lowest vertices are labeled zero. Then we label the remaining vertices in order.  Because each flow-up class has one more zero than its predecessor, the nonzero labels in essence ``flow up" the graph - hence the name.

At times it is more convenient to write the flow-up class $\G_k=(g_1,...,g_n)$ as a vector:
\[ \G_k=\begin{pmatrix}
	g_n \\
	\vdots \\
	g_1 \\
\end{pmatrix}\] 
Notice that in vector form we write the vertex labels from bottom to top instead of the standard top to bottom. We chose this notation so the elements of the vector would correspond to the height of vertices in the graph.

We were interested in not only finding flow-up classes but finding the smallest flow-up classes. For both the triangle and the general $n$-cycle we were able to explicitly identify the smallest leading element of a flow-up class. For the remaining elements, we were able to prove that a smallest element must exist without characterizing it explicitly. We give the formal definition of the smallest flow-up classes here. In Sections 3 and 4 we reveal the smallest leading entry and prove that the smallest flow-up classes exist.

\begin{definition}[Smallest Flow-Up Class]
	Fix a cycle with edge labels $(C_n,L)$. The smallest flow-up class $\G_k=(0,...,0,g_{k+1},...,g_n)$ on $(C_n,L)$ is the flow-up class whose nonzero entries are positive and if $\G_k'=(0,...,0,g_{k+1}',...,g_n')$ is another flow-up class with positive entries then $g_i' \geq g_i$ for all entries. By convention we consider \\ $\G_0=(1,...,1)$ the smallest flow-up class $\G_0$.
\end{definition}

Two types of flow-up classes are easily found on any edge-labeled $n$-cycle: trivial splines and splines with one nonzero label. If we label each vertex 1 on an edge-labeled $n$-cycle, we always obtain a spline, which we call a trivial spline. A trivial spline is a special case of the flow-up class $\G_0$. 

\begin{proposition}[Existence of Trivial Splines]
\label{TrivialSplines}
	Fix a cycle with edge labels $(C_n,L)$. The smallest flow-up class  on $(C_n,L)$ is $\G_0=(1,...,1)$. Moreover, any multiple of $\G_0$  is also a generalized spline. We call these splines trivial splines. 
\end{proposition}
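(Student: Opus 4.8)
The statement of Proposition \ref{TrivialSplines} makes three claims: that $\G_0 = (1, \ldots, 1)$ is a generalized spline, that every scalar multiple of it is a spline, and that it is the smallest flow-up class $\G_0$. The plan is to verify the spline condition directly from the definition, then invoke the module structure for the multiples, and finally check minimality against the definition of smallest flow-up class.

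\textbf{Step 1: $\G_0 = (1, \ldots, 1)$ is a spline.} I would appeal directly to the definition of a generalized spline. For every edge $e_i = v_j v_{j+1}$ of the cycle, the two endpoint labels are both $1$, so their difference is $1 - 1 = 0$. Since $\ell_i \mid 0$ for any positive integer edge label $\ell_i$, the congruence $g_j \equiv g_{j+1} \bmod \ell_i$ holds at every edge. Hence $\G_0$ satisfies the spline condition.

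\textbf{Step 2: Every multiple of $\G_0$ is a spline.} Here I would use the fact, cited from \cite[Theorem 2.12]{Gilbert}, that the set of splines forms a module over $\Z$ under componentwise operations. Since $\G_0$ is a spline by Step 1, any integer multiple $c \cdot \G_0 = (c, \ldots, c)$ is again a spline by closure under scalar multiplication. Alternatively, one can repeat the elementary argument of Step 1: the difference of endpoint labels is $c - c = 0$, which is divisible by every edge label, so the multiple is a spline without even needing the module structure.

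\textbf{Step 3: Minimality.} To show $\G_0 = (1, \ldots, 1)$ is the smallest flow-up class $\G_0$, I would argue that among flow-up classes $\G_0$ with all entries positive, no entry can be smaller than $1$; indeed, the smallest positive integer is $1$. Since the all-ones vector achieves the value $1$ in every component and is a valid spline by Step 1, it is entrywise minimal among positive flow-up classes, matching the definition of smallest flow-up class. The only subtlety worth noting is that the definition of $\G_0$ as the smallest flow-up class is partly conventional (the final sentence of the Smallest Flow-Up Class definition stipulates $\G_0 = (1, \ldots, 1)$ by convention), so this step is more a matter of confirming consistency with that convention than of overcoming a genuine obstacle.

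The main point to get right is simply the observation that $0$ is divisible by every positive integer, which is what makes the constant vector a spline on any edge-labeled graph regardless of the labels; there is no real obstacle here, as this proposition is foundational and essentially immediate from the definitions.
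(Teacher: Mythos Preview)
Your proof is correct and follows essentially the same approach as the paper: verify directly that $(1,\ldots,1)$ satisfies every edge congruence, then argue entrywise minimality among positive flow-up classes. Your Step~3 is in fact cleaner than the paper's version, which walks inductively through the congruences $1 \equiv g_{k+1} \bmod \ell_k$ to rule out $g_{k+1}=0$, whereas you cut straight to the point that any positive integer is at least $1$.
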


\begin{proof}
	The reader can easily verify that $\G_0=(1,...,1)$ is a spline on $(C_n,L)$. Suppose we can find a smaller flow-up class $\G_0'=(g_1,...,g_n)$. The leading entry $g_1$ must be a positive entry so it must be greater than or equal to 1. We know we can construct a spline with the first vertex label equal to 1 so $g_1=1$ is the smallest entry for the first vertex. The second entry $g_2$ must be the smallest nonnegative integer that in particular satisfies the congruence
\[ 1 \equiv g_2 \bmod \ell_1. \]
Zero is not congruent to 1 for any $\ell_1$ so $g_2 \geq 1$. Again, we know that we can construct a spline with $g_2=1$ so $g_2=1$ is the smallest entry for the second vertex. Assume that the smallest nonnegative integer label for $v_k$ (for $2 \leq k <n$) is 1. Then the $g_{k+1}$ must be congruent to 1 modulo the edge label $\ell_k$. Zero is not an option so $g_{k+1} \geq 1$. We can construct a spline with all vertex labels 1 so $g_{k+1}=1$ is the smallest entry for the $k+1$ vertex. Thus $\G_0=(1,...,1)$ is the smallest flow-up class $\G_0$ on $(C_n,L)$.
\end{proof}

Splines with only one nonzero label also exist on any edge-labeled $n$-cycle. If all other vertices besides $v_i$ are labeled 0, we can label $v_i$ with any multiple of the labels on the two adjoining edges. The following proposition states this in greater detail. 
\begin{proposition}[Existence of Flow-Up Class with One Nonzero Label]
\label{OneNonzeroLabel}
	Fix a cycle with edge labels $(C_n,L)$. The flow-up class $\G_{n-1}=(0,...,0,g_n)$ is a spline on $(C_n,L)$ whenever the vertex label $g_n$ is a multiple of $\lcm(\ell_{n-1},\ell_n)$. If $g_n=\lcm(\ell_{n-1},\ell_n)$ then $\G_{n-1}$ is the smallest flow-up class.
\end{proposition}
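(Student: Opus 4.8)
The plan is to prove the statement in two parts, matching its two sentences. First I would verify that $\G_{n-1}=(0,\ldots,0,g_n)$ satisfies the spline congruences exactly when $g_n$ is a multiple of $\lcm(\ell_{n-1},\ell_n)$; then I would deduce minimality by a short comparison argument. The main tool is simply the definition of a generalized spline applied edge by edge, so almost all edges will contribute nothing.

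For the existence part, I would first isolate which edges actually carry a nontrivial condition. By the numbering convention, the top vertex $v_n$ is incident to precisely two edges: the edge $e_{n-1}$ joining $v_{n-1}$ and $v_n$, and the edge $e_n$ joining $v_1$ and $v_n$. Every other edge $e_i$ with $1 \leq i \leq n-2$ joins two vertices among $v_1,\ldots,v_{n-1}$, each labeled $0$, so its congruence $0 \equiv 0 \bmod \ell_i$ holds automatically. It then remains only to examine the two edges at $v_n$: along $e_{n-1}$ the condition is $g_n \equiv 0 \bmod \ell_{n-1}$, i.e. $\ell_{n-1} \mid g_n$, and along $e_n$ it is $g_n \equiv 0 \bmod \ell_n$, i.e. $\ell_n \mid g_n$. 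Since an integer is divisible by both $\ell_{n-1}$ and $\ell_n$ if and only if it is divisible by their least common multiple, these two conditions hold simultaneously exactly when $\lcm(\ell_{n-1},\ell_n) \mid g_n$, which establishes the first claim.

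For the minimality part, I would appeal directly to the definition of the smallest flow-up class: I need $g_n$ to be positive and no larger than the corresponding entry of any competing flow-up class $\G_{n-1}'=(0,\ldots,0,g_n')$ with positive entries. By the existence part just proved, every such $g_n'$ must be a positive multiple of $\lcm(\ell_{n-1},\ell_n)$, and the smallest positive multiple of a positive integer $m$ is $m$ itself, so $g_n' \geq \lcm(\ell_{n-1},\ell_n)$. Choosing $g_n = \lcm(\ell_{n-1},\ell_n)$ therefore meets this lower bound, so $\G_{n-1}=(0,\ldots,0,\lcm(\ell_{n-1},\ell_n))$ is the smallest flow-up class.

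I expect no substantive obstacle here; the verification is entirely routine. The one point demanding care is the edge-indexing convention: one must remember that the two edges incident to the top vertex $v_n$ are $e_{n-1}$ and $e_n$ (and not, say, $e_n$ and $e_1$), so that the relevant modulus is indeed $\lcm(\ell_{n-1},\ell_n)$. Everything else follows from reading off the two nontrivial congruences and the elementary fact relating simultaneous divisibility to the lcm.
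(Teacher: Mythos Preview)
Your proof is correct and follows essentially the same approach as the paper: both arguments write out the system of congruences, observe that all but the two edges incident to $v_n$ impose trivial conditions, and conclude that $g_n$ must be a common multiple of $\ell_{n-1}$ and $\ell_n$, with the least common multiple giving the smallest positive choice. Your version is slightly more explicit about the edge-indexing and the ``if and only if'' needed for minimality, but the substance is the same.
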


\begin{proof}
	We can see that $\G_{n-1}=(0,...,0,\lcm(\ell_{n-1}, \ell_n))$  is a spline because it satisfies the system of congruences
	\[ \begin{cases}
		0 \equiv 0 \bmod \ell_1 \\
		\vdots \\
		x \equiv 0 \bmod \ell_{n-1} \\
		x \equiv 0 \bmod \ell_n. \\
	\end{cases} \]
	
	Now we need to show that $\lcm(\ell_{n-1},\ell_n)$ is the smallest positive integer that satisfies the system of congruences. By definition, $x$ must be a multiple of $\ell_{n-1}$ and $\l_n$. It follows that $x=\lcm(\ell_{n-1},\ell_n)$ is the smallest positive solution to the system. Thus $\G_{n-1}$ is the smallest flow-up class. 
\end{proof}

Establishing that other flow-up classes $\G_k$ for $2 \leq k < n-1$ exist is slightly more difficult on triangles and significantly harder on $n$-cycles. The proof for triangles is in the next section and the proof for $n$-cycles is in Section 4. In both of these sections, after showing that the flow-up classes exist, we will prove the smallest flow-up classes also exist and form a basis for the module of splines over the integers.

\section{Triangles}

In this section we address splines on triangles. Our primary goal is to prove that the smallest flow-up classes form a basis for the module of splines over the integers. In order to do this, we need to establish that the smallest flow-up classes exist on any edge-labeled triangle. We split this into several parts. First we prove that flow-up classes $\G_0,\G_1$ and $\G_2$ exist. Next we find the smallest leading element of the flow-up classes. Then we establish the existence of the smallest flow-up classes. We wrap up the section by proving that the smallest flow-up classes form a basis.

The flow-up classes $\G_0$ and $\G_2$ exist  by Proposition \ref{TrivialSplines} and Proposition \ref{OneNonzeroLabel}  respectively, so we only need to prove that the flow-up class $\G_1$ exists. 

\begin{theorem}[Existence of Flow-Up Class $\G_1$ on Triangles]
\label{Tri:FlowUpClass}
	Fix a cycle with edge labels $(C_3,L)$. The flow-up class $\G_1=(0, g_2, g_3)$ exists on $(C_3,L)$ if and only if $g_2$ is a multiple of $\lcm(\ell_1,\gcd(\ell_2,\ell_3))$. 
\end{theorem}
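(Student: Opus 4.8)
The plan is to reduce the statement to a single application of the Chinese Remainder Theorem. First I would write out the three edge congruences defining $\G_1=(0,g_2,g_3)$. Under the numbering convention that $e_1$ joins $v_1,v_2$, that $e_2$ joins $v_2,v_3$, and that $e_3$ joins $v_1,v_3$, and setting the leading entry $g_1=0$, a spline must satisfy $0\equiv g_2 \bmod \ell_1$, $g_2\equiv g_3 \bmod \ell_2$, and $0\equiv g_3 \bmod \ell_3$. The first congruence says $\ell_1 \mid g_2$, and the third says $\ell_3 \mid g_3$. This isolates a constraint on $g_2$ alone and reduces the problem to deciding when a compatible $g_3$ can be chosen.

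Next, regarding $g_2$ as fixed, the existence of a valid third entry amounts to solving the two-congruence system $g_3 \equiv g_2 \bmod \ell_2$ and $g_3 \equiv 0 \bmod \ell_3$ for $g_3$. By the Chinese Remainder Theorem, this system has an integer solution if and only if the two right-hand sides agree modulo $\gcd(\ell_2,\ell_3)$, that is, if and only if $g_2 \equiv 0 \bmod \gcd(\ell_2,\ell_3)$, equivalently $\gcd(\ell_2,\ell_3)\mid g_2$.

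Combining the two constraints on $g_2$, the flow-up class $\G_1$ exists precisely when $g_2$ is simultaneously a multiple of $\ell_1$ and of $\gcd(\ell_2,\ell_3)$, which is equivalent to $\lcm(\ell_1,\gcd(\ell_2,\ell_3)) \mid g_2$. To close the argument I would check both directions explicitly: necessity (any such spline forces both divisibilities) is immediate from the congruences above, while sufficiency requires actually producing a $g_3$, which the Chinese Remainder Theorem supplies once the divisibility holds.

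The argument is short, so I do not expect a serious obstacle. The one point requiring care is invoking the \emph{general} form of the Chinese Remainder Theorem, since the moduli $\ell_2$ and $\ell_3$ need not be relatively prime; the textbook coprime version would not apply. The other thing to keep straight is that there are two independent divisibility conditions on $g_2$ — one coming from the leading zero at $v_1$ through the edge label $\ell_1$, the other from compatibility across the $e_2$--$e_3$ path — which must be kept separate before being merged into the single $\lcm$ condition.
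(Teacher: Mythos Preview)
Your proposal is correct and follows essentially the same approach as the paper: write out the three edge congruences, apply the general Chinese Remainder Theorem to the pair involving $g_3$ to extract the condition $\gcd(\ell_2,\ell_3)\mid g_2$, and combine this with $\ell_1\mid g_2$ to obtain the $\lcm$ condition. Your explicit attention to checking both directions and to needing the non-coprime form of the Chinese Remainder Theorem matches the paper's treatment.
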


\begin{proof}
		A flow-up class  $\G_1=(0,g_2,g_3)$ exists if and only if  we are able to find integers $g_2$ and $g_3$ that satisfy the following system of congruences:
\[ \begin{cases}
g_2 \equiv 0 \bmod \ell_1 \\
g_3 \equiv g_2 \bmod \ell_2 \\
g_3 \equiv 0 \bmod \ell_3 \\
\end{cases} \]		
By the Chinese Remainder Theorem, there exists an integer $g_3$ satisfying the second and third congruences if and only if $g_2 \equiv 0 \bmod \gcd(\ell_2,\ell_3)$. By a similar application of the Chinese Remainder Theorem,  we see that $g_2$ simultaneously satisfies this congruence and the first congruence in the system if and only if $g_2$ is a multiple of $\lcm(\ell_1, \gcd(\ell_2,\ell_3))$. If we let $g_2$ be any multiple of $\lcm(\ell_1,\gcd(\ell_2,\ell_3))$, then we can find an integer $g_3$ that satisfies the system. Consequently,  $\G_1=(0,g_2,g_3)$ is a spline.
\end{proof}

\begin{theorem}[Smallest Leading Element of $\G_1$ on Triangles]
\label{Tri:SmallestLeadingElement}
Fix a cycle with edge labels $(C_3,L)$. Let $\G_1=(0,g_2,g_3)$ be a flow-up class on $(C_3,L)$. There is a flow-up class $\G_1$ for which the leading element $g_2=\lcm(\ell_1,\gcd(\ell_2,\ell_3))$ and this is the smallest flow-up class $\G_1$.
\end{theorem}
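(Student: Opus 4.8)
The plan is to prove the statement in two movements: first pin down the smallest admissible leading entry, then show that this choice determines the trailing entry. Write $L = \lcm(\ell_1, \gcd(\ell_2,\ell_3))$. For the leading entry I would appeal directly to Theorem \ref{Tri:FlowUpClass}, which says a flow-up class $\G_1 = (0, g_2, g_3)$ exists exactly when $g_2$ is a multiple of $L$. Since the nonzero entries of a flow-up class must be positive, any admissible $g_2$ is a \emph{positive} multiple of $L$, forcing $g_2 \geq L$; and $g_2 = L$ is itself admissible by the same theorem. This simultaneously shows that $L$ is attainable as a leading element and that no flow-up class $\G_1$ can have a smaller one.

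With $g_2 = L$ fixed, I would determine $g_3$ via the Chinese Remainder Theorem. The two remaining congruences are $g_3 \equiv g_2 \bmod \ell_2$ and $g_3 \equiv 0 \bmod \ell_3$. Because $L$ is a multiple of $\gcd(\ell_2, \ell_3)$, we have $g_2 \equiv 0 \bmod \gcd(\ell_2,\ell_3)$, so the consistency hypothesis of the Chinese Remainder Theorem is met and a solution $g_3$ exists. The theorem further guarantees that all solutions are congruent modulo $\lcm(\ell_2,\ell_3)$, so the solution set is a single residue class with a well-defined least positive representative, which I would take as $g_3$. This produces a concrete candidate for the smallest flow-up class $\G_1$ with leading entry $L$.

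The step I expect to be the main obstacle is the final assertion that the class built this way really is the \emph{smallest} flow-up class $\G_1$ in the sense of the definition, namely that its entries are simultaneously minimal against every competing class $\G_1' = (0, g_2', g_3')$. The inequality $g_2' \geq g_2$ is immediate from the first paragraph, but the delicate point is ruling out a competitor whose leading entry $g_2'$ is a strictly larger multiple of $L$ yet whose trailing entry $g_3'$ is strictly smaller: a priori a different residue $g_2' \bmod \ell_2$ could be matched by a smaller positive multiple of $\ell_3$. I would try to control this by tracking the residue $g_2 \bmod \ell_2$ and relating it, through the multiples of $\ell_3$ that solve the second congruence, to the least positive $g_3$, and I anticipate that the divisibility bookkeeping tying $\ell_1$, $\ell_2$, and $\ell_3$ together through $L$ is exactly where the real work concentrates. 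This joint-minimality claim, rather than the leading-entry computation, is the crux I would spend the most care on.
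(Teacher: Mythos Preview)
Your first paragraph is exactly the paper's proof: invoke Theorem~\ref{Tri:FlowUpClass} to see that every admissible $g_2$ is a positive multiple of $\lcm(\ell_1,\gcd(\ell_2,\ell_3))$, so this value is the least possible leading entry and is attained. The paper's proof of this theorem ends right there. Despite the phrase ``this is the smallest flow-up class $\G_1$'' in the statement, the theorem (titled \emph{Smallest Leading Element}) is only about $g_2$; the construction of a full smallest class is deferred to Theorem~\ref{Tri:SmallestFlowUpClass}, whose proof does essentially what your second paragraph does---fix the minimal $g_2$ and then take the least positive compatible $g_3$---with well-ordering in place of your explicit CRT coset description.

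The concern you isolate in your third paragraph is genuine and is not something the paper resolves, either here or in Theorem~\ref{Tri:SmallestFlowUpClass}. The paper simply takes the greedy choice (smallest $g_2$, then smallest $g_3$ for that $g_2$) and declares it the smallest flow-up class, without checking componentwise minimality against classes with a larger leading entry. In fact your worry is realized: with $(\ell_1,\ell_2,\ell_3)=(1,6,4)$ one has $\lcm(\ell_1,\gcd(\ell_2,\ell_3))=2$, and both $(0,2,8)$ and $(0,4,4)$ are flow-up classes $\G_1$, neither dominating the other. So a componentwise-minimal class in the sense of the paper's Definition need not exist, and the divisibility bookkeeping you anticipated cannot close the gap in general. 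For the purposes of matching the paper, though, your first paragraph already suffices; you are simply noticing a subtlety the paper leaves unexamined.
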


\begin{proof}
Let $\G_1=(0,g_2,g_3)$ be a flow-up class on $(C_3,L)$. In the proof of Theorem \ref{Tri:FlowUpClass} we showed that $g_2$ must be a multiple of $\lcm(\ell_1,\gcd(\ell_2,\ell_3))$. Every choice of $g_2$ that satifies this condition has at least one $g_3$ by the Chinese Remainder Theorem. Thus $g_2=\lcm(\ell_1,\gcd(\ell_2,\ell_3))$ is the smallest positive value that $g_2$ can be.

\end{proof}

\begin{theorem}[Existence of Smallest Flow-Up Class $\G_1$ on Triangles]
\label{Tri:SmallestFlowUpClass}
Fix a cycle with edge labels $(C_3,L)$.The smallest flow up class $\G_1$ exists on $(C_3,L)$.
\end{theorem}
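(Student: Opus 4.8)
The plan is to build the smallest flow-up class greedily, one coordinate at a time, using Theorem~\ref{Tri:SmallestLeadingElement} to pin down the leading entry and the Chinese Remainder Theorem together with the well-ordering of the positive integers to pin down the remaining entry. By Theorem~\ref{Tri:FlowUpClass} a flow-up class $\G_1=(0,g_2,g_3)$ with positive entries exists (take $g_2=\lcm(\ell_1,\gcd(\ell_2,\ell_3))$ and any positive solution $g_3$), so the set of such classes is nonempty; the goal is to single out a smallest member of this set.

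First I would fix the leading entry at its minimum. By Theorem~\ref{Tri:SmallestLeadingElement} the smallest value the nonzero entry $g_2$ can take is $\lcm(\ell_1,\gcd(\ell_2,\ell_3))$, and there is a flow-up class realizing it, so I set $g_2=\lcm(\ell_1,\gcd(\ell_2,\ell_3))$. Next I would minimize $g_3$ subject to this choice. With $g_2$ fixed, the integer $g_3$ must satisfy the two congruences $g_3\equiv g_2 \bmod \ell_2$ and $g_3\equiv 0 \bmod \ell_3$. Because $g_2$ is a multiple of $\gcd(\ell_2,\ell_3)$, the compatibility hypothesis of the Chinese Remainder Theorem holds, so solutions exist and form a single residue class modulo $\lcm(\ell_2,\ell_3)$. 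That residue class meets the positive integers, so by well-ordering it contains a least positive element, and I take $g_3$ to be that element. The resulting $\G_1=(0,g_2,g_3)$ is then a flow-up class with positive entries whose leading entry is as small as possible and whose second entry is as small as possible given the first.

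The step I expect to be the main obstacle is reconciling this construction with the definition of the \emph{smallest} flow-up class, which demands a single class dominated entrywise by every competitor. The construction above is lexicographic: it minimizes the leading entry, and only then minimizes the next entry. To match the definition I would argue that any competing flow-up class $\G_1'=(0,g_2',g_3')$ with positive entries has $g_2'\geq g_2$ by Theorem~\ref{Tri:SmallestLeadingElement}, and that once the leading entry is at its minimum the congruence description forces $g_3'\geq g_3$. Real care is needed here, because a competitor could in principle trade a larger leading entry $g_2'$ for a strictly smaller second entry $g_3'$; the delicate point is to confirm that the leading-entry-first ordering is the one intended by the definition and that minimizing $g_3$ at the minimal $g_2$ does not conflict with global minimization of $g_3$. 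Once this ordering issue is settled, the existence of the smallest flow-up class $\G_1$ follows at once from the two minimization steps.
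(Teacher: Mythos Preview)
Your approach is essentially identical to the paper's: fix $g_2=\lcm(\ell_1,\gcd(\ell_2,\ell_3))$ via Theorem~\ref{Tri:SmallestLeadingElement}, then invoke well-ordering on the coset of admissible $g_3$'s to pick the least positive one. The paper's proof does exactly this, in fewer words, and then simply declares the result to be the smallest flow-up class.

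The concern you flag in your final paragraph is real, and it is worth noting that the paper does \emph{not} resolve it either; it passes over the issue in silence. Your worry is justified: with $\ell_1=1$, $\ell_2=3$, $\ell_3=5$ the construction gives $g_2=1$, $g_3=10$, while the competitor $g_2'=2$, $g_3'=5$ is also a flow-up class with positive entries and has a strictly smaller third coordinate. So the entrywise-minimality reading of the definition can fail to be satisfied by any single class. In practice the paper treats ``smallest'' lexicographically (minimize the leading nonzero entry first, then the next, and so on), and this is the reading under which both your argument and the paper's go through; it is also the only reading consistent with how the smallest flow-up classes are used downstream in Theorems~\ref{Tri:Basis} and~\ref{ncycles:Basis}, where only the minimality of the \emph{leading} entry is ever invoked. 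So your proof is correct to exactly the same extent the paper's is, and you deserve credit for spotting a subtlety the paper glosses over.
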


\begin{proof}	
Let $\G_1=(0,g_2,g_3)$ be a flow-up class on $(C_3,L)$ and let $g_2$ be the smallest leading element. By Theorem \ref{Tri:FlowUpClass} we know $g_2=\lcm(\ell_1,\gcd(\ell_2,\ell_3))$. Consider the coset of all integers $g_3$ that would satisfy the edge conditions at $v_3$ making $\G_1$ a spline. Well-Ordering tells us that a smallest positive $g_3$ exists. If we choose that integer as $g_3$ then $\G_1$ is the smallest flow-up class. 
\end{proof}

Note that we didn't need to find $g_3$ explicitly to prove $\G_1$ is the smallest flow-up class. In order to fit the definition of the smallest flow-up class we only needed to show that a smallest $g_3$ exists. 

Now that we demonstrated that the smallest flow-up classes $\G_0,\G_1$ and $\G_2$ exist, we will prove they form a basis for the module of splines. 

\begin{theorem}[Basis for Triangles]
\label{Tri:Basis}
	Fix a cycle with edge labels $(C_3,L)$. Let $\G_0,\G_1$ and $\G_2$ be the smallest flow-up classes on $(C_3,L)$. Then the linear span  of $\{ \G_0, \G_1, \G_2\}$ is a basis for the module of splines  over the integers.
\end{theorem}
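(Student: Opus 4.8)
The plan is to show that $\{\G_0, \G_1, \G_2\}$ spans the module of splines over $\Z$ and is linearly independent, and hence forms a basis. The key structural feature I would exploit is that these three flow-up classes have a triangular shape: in vector form (written bottom-to-top) $\G_0$ has a nonzero leading entry in the $v_1$-slot, $\G_1$ has a zero in the $v_1$-slot and a nonzero entry in the $v_2$-slot, and $\G_2 = (0,0,g_3)$ has zeros in both lower slots. This upper-triangular structure is exactly what makes the argument work, so I would frame the proof around it.

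First I would establish linear independence, which is the easy direction. Suppose $a_0 \G_0 + a_1 \G_1 + a_2 \G_2 = (0,0,0)$ for integers $a_0, a_1, a_2$. Reading off the $v_1$-coordinate gives $a_0 \cdot 1 = 0$, so $a_0 = 0$; the $v_2$-coordinate then forces $a_1 g_2 = 0$ with $g_2 = \lcm(\ell_1, \gcd(\ell_2,\ell_3)) > 0$, so $a_1 = 0$; and finally the $v_3$-coordinate forces $a_2 = 0$. The triangularity makes this a routine back-substitution.

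The main work is the spanning argument, and this is the step I expect to be the genuine obstacle. Given an arbitrary spline $\G = (c_1, c_2, c_3)$, I would peel off the flow-up classes one coordinate at a time, mimicking Gaussian elimination but over $\Z$ rather than a field. The first step is clear: subtract $c_1 \G_0$ from $\G$ to kill the $v_1$-entry, producing a spline $\G - c_1\G_0 = (0, c_2 - c_1, c_3 - c_1)$. The delicate point is the next step: I must argue that the resulting $v_2$-entry $c_2 - c_1$ is divisible by $g_2 = \lcm(\ell_1, \gcd(\ell_2,\ell_3))$, so that I can subtract an integer multiple of $\G_1$ and clear it. This divisibility is not automatic from the definition of a spline; it must be extracted from the congruence conditions. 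Since $(0, c_2 - c_1, c_3 - c_1)$ is a spline, its $v_2$-entry satisfies $c_2 - c_1 \equiv 0 \bmod \ell_1$ (edge $e_1$ joins the zero at $v_1$) and, because there is a valid $v_3$-entry, the Chinese Remainder Theorem forces $c_2 - c_1 \equiv 0 \bmod \gcd(\ell_2,\ell_3)$ — exactly the two conditions whose simultaneous satisfiability is equivalent, again by the Chinese Remainder Theorem, to divisibility by $\lcm(\ell_1, \gcd(\ell_2,\ell_3))$. This is precisely the characterization proved in Theorem \ref{Tri:FlowUpClass}, so I would cite it directly rather than rederive it. After subtracting the appropriate multiple of $\G_1$, the remaining spline has the form $(0, 0, r)$ for some integer $r$; by Proposition \ref{OneNonzeroLabel} such a spline must have $r$ a multiple of $\lcm(\ell_{2},\ell_3) = g_3$, so it is an integer multiple of $\G_2$.

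Combining the two halves, every spline is an integer combination of $\G_0, \G_1, \G_2$ and these are independent, so they form a basis. The only subtlety worth flagging is that all the elimination steps use \emph{integer} coefficients, and the divisibility facts guaranteeing that the coefficients are integers are exactly the content of Theorem \ref{Tri:FlowUpClass} and Proposition \ref{OneNonzeroLabel}; this is what replaces the field-coefficient freedom of ordinary Gaussian elimination and is the crux of the argument.
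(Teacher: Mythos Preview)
Your proposal is correct and follows essentially the same approach as the paper: both prove independence from the triangular shape of the flow-up classes and prove spanning by successively subtracting $c_1\G_0$, then an integer multiple of $\G_1$ (using Theorem~\ref{Tri:FlowUpClass} to guarantee the needed divisibility of the $v_2$-entry), and finally an integer multiple of $\G_2$ (using Proposition~\ref{OneNonzeroLabel}). Your write-up is slightly more explicit about the back-substitution for independence and about why the integer coefficients exist, but the argument is the same.
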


\begin{proof}
	The splines are linearly independent because each has a different number of leading zeroes. 
	
	The harder step will be to show that every spline is in the span of these flow-up classes. Let $\y=(y_1,y_2,y_3)$ be a spline on $(C_3,L)$. Define $\y'$ as
		\[ \y' = \y-y_1\G_0 = \begin{pmatrix} y_3 \\ y_2 \\ y_1 \end{pmatrix} - y_1\begin{pmatrix} 1 \\ 1 \\ 1 \end{pmatrix} = \begin{pmatrix} y_3-y_1 \\ y_2-y_1 \\ 0 \end{pmatrix} \]
		Because $\y'$ is a linear combination of splines $\y$ and $\G_0$ and the set of splines is a module, the vector $\y'$ is also a spline. The leading element $y_2-y_1$ of $\y'$ is an integer multiple of $\lcm(\ell_1,\gcd(\ell_2,\ell_3))$ by Theorem \ref{Tri:FlowUpClass}. The smallest flow-up class $\G_1=(0,g_2,g_3)$ has the leading element $g_2=\lcm(\ell_1,\gcd(\ell_2,\ell_3))$ by Theorem \ref{Tri:SmallestLeadingElement}. This means we can find an $s \in \Z$, with $y_2-y_1=s \cdot g_2$.
		
		Now subtract $s \G_1$ from $\y'$ and call the difference $\y''$. 
		\[ \y'' = \y'-s\G_1 = 
			\begin{pmatrix} 
				y_3-y_1 \\
				y_2-y_1 \\ 
				0 \\
			 \end{pmatrix} 
			- s
			\begin{pmatrix} 
				g_{3} \\ 
				g_{2} \\ 
				0 \\
			\end{pmatrix} 
			=
			 \begin{pmatrix} 
				y_3-y_1-s\cdot g_{3} \\ 
				y_2-y_1-s \cdot g_{2} \\ 
				0 \\ 
			\end{pmatrix} 
			=
			\begin{pmatrix} 
				y_3-y_1-s\cdot g_{3} \\ 
				0 \\
				0 \\ 
			\end{pmatrix} \]
		This is a spline because the set of splines forms a module. Thus $y_3-y_1-s\cdot g_3$ satisfies the edge conditions at vertex $v_3$ so it must be a multiple of $\ell_2$ and $\ell_3$. For some integer $t$ we can write $y_3-y_1 - s\cdot g_{3} = t\cdot \lcm(\ell_2,\ell_3)$.  It follows that
		\[ \y'' - t \G_3 = 
			\begin{pmatrix} 
				y_3-y_1 - s \cdot g_{3} \\
				0 \\
				0 \\
			\end{pmatrix} 
			- t 
			\begin{pmatrix} 
				\lcm(\ell_2,\ell_3) \\
				0 \\
				0 \\ 
			\end{pmatrix} 
			= 
			\begin{pmatrix} 
				0 \\ 
				0 \\ 
				0 \\ 
			\end{pmatrix}. \]
		Then
		\[ \y = y_1 \G_0 + s \G_1 + t \G_2 \]
		for $y_1, s, t \in \Z$.  Therefore $\y$ is an integer linear combination of $\G_0, \G_1, \G_2$. Thus $ \{\G_0, \G_1, \G_2\}$ forms a basis over the integers for the splines on $(C_3,L)$.
\end{proof}

\section{$n$-Cycles}

The concepts of the previous section can be generalized from triangles to $n$-cycles. As we did for triangles, we will prove that smallest flow-up classes exist on $n$-cycles and form a basis for the module of splines over the integers. The proofs for $n$-cycles are more involved so we introduce two lemmas to simplify the proofs. 

The first lemma is that if we have a flow-up class with two leading zeroes on an $n$-cycle, we can contract the first edge to get a flow-up class with one leading zero on an $n-1$ cycle. The second lemma states that this process can be reversed. 

\begin{lemma}[Edge Contraction]
\label{EdgeContraction}
Fix a cycle with edge labels $(C_n,L)$. Let $(0,0,g_3,...,g_n)$ be a spline on $(C_n,L)$. If  we contract the edge labeled $\ell_1$ the vertex labels $(0,g_3,...,g_n)$ will be a spline on $(C_{n-1},L')$ where $L'=(\ell_2,...,\ell_n)$.
\end{lemma}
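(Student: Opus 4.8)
The plan is to verify directly that the spline conditions on the contracted cycle form a subset of the spline conditions on the original cycle, so that no new requirement is introduced and the one condition that is lost was vacuous to begin with. First I would write out what it means for $(0,0,g_3,\dots,g_n)$ to be a spline on $(C_n,L)$. Using the vertex- and edge-numbering convention, the labeling must satisfy
\[
\begin{cases}
0 \equiv 0 \bmod \ell_1 \\
0 \equiv g_3 \bmod \ell_2 \\
g_i \equiv g_{i+1} \bmod \ell_i \quad (3 \le i \le n-1) \\
g_n \equiv 0 \bmod \ell_n.
\end{cases}
\]
The key observation is that the first congruence, coming from the edge $e_1$ labeled $\ell_1$, holds automatically, and it is the only condition in which $\ell_1$ appears. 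Contracting $e_1$ therefore discards exactly this redundant congruence and nothing else.

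Next I would set up the contracted cycle carefully, since the only real work is bookkeeping. Contracting $e_1$ identifies $v_1$ and $v_2$ into a single vertex; because both carry the label $0$, the merged vertex is labeled $0$. Re-indexing by the $(C_{n-1})$ convention, I would take the merged vertex to be the new bottom vertex with label $0$, and let the vertices formerly carrying $g_3,\dots,g_n$ become the remaining vertices in counterclockwise order. The surviving edges are precisely $e_2,\dots,e_n$, so the new edge-label set is $L'=(\ell_2,\dots,\ell_n)$, exactly as claimed in the statement.

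Then I would write out the spline conditions for $(0,g_3,\dots,g_n)$ on $(C_{n-1},L')$ and match them against the original system term by term. The edge joining the new bottom vertex to its neighbour contributes $0 \equiv g_3 \bmod \ell_2$; the interior edges contribute $g_i \equiv g_{i+1} \bmod \ell_i$ for $3 \le i \le n-1$; and the closing edge, which is the old $e_n$ now joining the merged vertex to the top vertex, contributes $g_n \equiv 0 \bmod \ell_n$. Each of these is one of the congruences already satisfied by the original spline, so $(0,g_3,\dots,g_n)$ is a spline on $(C_{n-1},L')$.

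I do not expect a genuine mathematical obstacle here: the entire content is that contracting the edge between two vertices carrying the same label removes only a trivially satisfied congruence, leaving every remaining edge condition intact. The one place requiring care is the re-indexing of vertices and edges after contraction, so I would be explicit about how the new vertices and edges correspond to the old ones, which makes the term-by-term matching of the two congruence systems completely unambiguous.
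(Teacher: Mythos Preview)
Your proposal is correct and follows essentially the same approach as the paper: write out the system of congruences for the original spline, observe that contracting $e_1$ removes only the trivially satisfied congruence $0\equiv 0 \bmod \ell_1$, and conclude that the remaining congruences are exactly those defining a spline on $(C_{n-1},L')$. Your version is a bit more explicit about the re-indexing bookkeeping, but the argument is the same.
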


\begin{proof}
Let $(0,0,g_3,...,g_n)$ be a spline on $(C_n,L)$. Then the following system holds
		\[ \begin{cases}
			0 \equiv 0 \bmod \ell_1 \\
			0 \equiv g_3 \bmod \ell_2 \\
			\vdots \\
			g_n \equiv 0 \bmod \ell_n. 
		\end{cases} \]
		If we contract edge $\ell_1$ then we remove the congruence $0 \equiv 0 \bmod \ell_1$ from the system. The rest of the system still holds so we have a spline on $(C_{n-1},L')$ with $L'=(\ell_2,...,\ell_n)$.
\end{proof}

\begin{lemma}[Adding an Edge]
\label{EdgeAddition}
Fix a cycle with edge labels $(C_{n-1},L')$ where \newline $L'=(\ell_2,...,\ell_{n})$. Let $(0,g_3,...,g_n)$ be a spline on $(C_{n-1},L')$. 
Consider the cycle $(C_n,L)$ formed by inserting a vertex $v_0$ labeled zero and an edge labeled $\ell_1$ into the cycle so that the edge labeled  $\ell_{n}$ connects $v_n$  to $v_0$ and the edge labeled $\ell_1$ connects $v_1$ to $v_0$.  Then $(0,0,g_3,...,g_n)$ is a spline on $(C_n,L)$.
\end{lemma}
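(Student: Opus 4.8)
The plan is to prove this by a direct verification that mirrors the argument of Lemma~\ref{EdgeContraction} run in reverse: rather than deleting a congruence, I will check that re-inserting the vertex and edge adds only one new congruence, and that this new congruence holds automatically. First I would fix the standard edge-numbering convention on the enlarged cycle $(C_n,L)$, so that the inserted label $\ell_1$ sits on the edge joining the two zero-labeled vertices $v_1$ and $v_2$, the label $\ell_i$ joins $v_i$ and $v_{i+1}$ for $2 \le i \le n-1$, and $\ell_n$ joins $v_n$ and $v_1$. With this convention I would write down the full system of congruences that $(0,0,g_3,\dots,g_n)$ must satisfy to be a spline on $(C_n,L)$.

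Next I would split this system into two parts. The one genuinely new condition, coming from the inserted edge $\ell_1$ between the two zero-labeled vertices, is $0 \equiv 0 \bmod \ell_1$, which holds trivially for every positive integer $\ell_1$. Every remaining condition---namely $0 \equiv g_3 \bmod \ell_2$, the chain $g_i \equiv g_{i+1} \bmod \ell_i$ for $3 \le i \le n-1$, and $g_n \equiv 0 \bmod \ell_n$---is exactly a congruence already satisfied by the spline $(0,g_3,\dots,g_n)$ on $(C_{n-1},L')$. The key point is that inserting $v_0$ with label $0$ does not alter the value carried at either endpoint of any of these preserved edges: in $(C_{n-1},L')$ the edges $\ell_2$ and $\ell_n$ met at a single vertex labeled $0$, and after the insertion each meets a (distinct) vertex still labeled $0$, so the numerical content of every preserved congruence is unchanged. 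Hence each of these holds by hypothesis.

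Combining the two observations shows that $(0,0,g_3,\dots,g_n)$ satisfies the complete congruence system of $(C_n,L)$ and is therefore a spline, which is the assertion. I do not anticipate a real obstacle here, since the argument is entirely a bookkeeping check; the only point requiring any care is confirming that the endpoints of the two preserved ``boundary'' edges $\ell_2$ and $\ell_n$ retain the label $0$ after the split, so that the spline hypothesis on $(C_{n-1},L')$ transfers verbatim and the single added congruence is vacuous.
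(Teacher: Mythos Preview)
Your proposal is correct and follows essentially the same approach as the paper's proof: both arguments write out the congruence system, observe that the single new condition introduced by the inserted edge is $0 \equiv 0 \bmod \ell_1$ and hence vacuous, and note that all remaining congruences are precisely those already satisfied on $(C_{n-1},L')$. Your version is in fact somewhat more careful than the paper's in explicitly tracking that the endpoints of the boundary edges $\ell_2$ and $\ell_n$ remain labeled $0$ after the split.
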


\begin{proof}
Let $(0,g_2,...,g_{n-1})$ be a spline on $(C_{n-1},L')$ with $L'=(\ell_1,...,\ell_{n-1})$. Then the following system holds
		\[ \begin{cases}
			0 \equiv g_2 \bmod \ell_1 \\
			\vdots \\
			g_n \equiv 0 \bmod \ell_n. 
		\end{cases} \]
The congruence $0 \equiv 0 \bmod \ell_0$ is met for any $\ell_0 \in \N$. If we add this as the first congruence in the system above, the system will still be satisfied. This new system represents a spline $(0,0,g_2,...,g_{n-1})$ on $(C_n,L)$ where $L=(\ell_0,...,\ell_{n-1})$.

\end{proof}

The next theorem states that flow-up classes exist on all edge-labeled $n$-cycles.

\begin{theorem}[Existence of Flow-Up Classes]
\label{ncycles:FlowUpClasses}
	Fix a cycle with edge labels $(C_n,L)$. Let $n\geq 3$ and $1 \leq k<n$. There exists a flow-up class $\G_k$ on $(C_n,L)$.
\end{theorem}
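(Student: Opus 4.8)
The plan is to induct on the size $n$ of the cycle, using the two lemmas just established to climb from $(n-1)$-cycles to $n$-cycles. The base case $n=3$ is already finished: $\G_0$ and $\G_2$ exist by Proposition~\ref{TrivialSplines} and Proposition~\ref{OneNonzeroLabel}, while $\G_1$ exists by Theorem~\ref{Tri:FlowUpClass}. So I would assume that for some $n \geq 4$ every flow-up class exists on every edge-labeled $(n-1)$-cycle, and then fix an edge-labeled $n$-cycle $(C_n,L)$ together with an index $1 \leq k < n$.

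The cases $k=0$ and $k=1$ I would dispatch directly, since neither can be produced by the lemmas. For $k=0$ this is just Proposition~\ref{TrivialSplines}. For $k=1$ I would write down an explicit spline: setting every nonzero vertex equal to $M=\lcm(\ell_1,\ldots,\ell_n)$ gives $\G_1=(0,M,\ldots,M)$, and the edge congruences are immediate, since adjacent nonzero vertices carry equal labels while the two edges meeting $v_1$ are satisfied because $M$ is a multiple of every $\ell_i$. As $M>0$, this labeling has exactly one leading zero, so it is a valid $\G_1$.

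The heart of the argument is the range $2 \leq k \leq n-1$, which is where the Edge Addition Lemma (Lemma~\ref{EdgeAddition}) does the work. Because $1 \leq k-1 < n-1$, the inductive hypothesis supplies a flow-up class $\G_{k-1}$ on the contracted cycle $(C_{n-1},L')$ with $L'=(\ell_2,\ldots,\ell_n)$. Crucially $k-1 \geq 1$, so $\G_{k-1}=(0,g_3,\ldots,g_n)$ genuinely begins with a zero and is therefore an admissible input to Lemma~\ref{EdgeAddition}. Applying the lemma inserts a zero-labeled vertex together with the edge $\ell_1$, producing the spline $(0,0,g_3,\ldots,g_n)$ on $(C_n,L)$; this prepends exactly one more leading zero, yielding a spline with precisely $k$ leading zeroes, which is the desired $\G_k$. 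This closes the induction. (As a consistency check, taking $k=n-1$ recovers $\G_{n-1}$ from $\G_{n-2}$ on $C_{n-1}$, agreeing with Proposition~\ref{OneNonzeroLabel}.)

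The step I expect to demand the most care is the boundary bookkeeping at $k=1$: the Edge Addition Lemma can only raise the leading-zero count of an already zero-started spline, so it never reaches a flow-up class with a single leading zero. One must recognize that $k=0$ and $k=1$ are irreducible \emph{seeds} at every level $n$ and handle them outside the lemma, rather than expecting the inductive step alone to cover all $k$. Once that is noticed, verifying the index inequalities $1 \leq k-1 < n-1$ and confirming that the prepended vertex increases the number of leading zeroes by exactly one are routine.
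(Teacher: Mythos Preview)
Your proof is correct, and its overall architecture matches the paper's: both treat $k=1$ as a standalone case and then induct on $n$, feeding flow-up classes on $C_{n-1}$ through Lemma~\ref{EdgeAddition} to obtain the classes $\G_k$ with $k\geq 2$ on $C_n$. The one substantive difference is how you handle $k=1$. The paper builds $\G_1$ by a somewhat elaborate construction: it chooses $g_{i+1}=p_1\ell_1+\cdots+p_i\ell_i$ step by step, extracts a $\gcd$ condition on $g_{n-1}$, and then invokes the Chinese Remainder Theorem to close the last two congruences. Your observation that $(0,M,\ldots,M)$ with $M=\lcm(\ell_1,\ldots,\ell_n)$ is already a valid $\G_1$ bypasses all of that in one line. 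What you lose is any information about how small $g_2$ can be (the paper's argument is laying groundwork for Theorem~\ref{ncycles:SmallestLeadingElement}), but for the bare existence statement your shortcut is entirely adequate and considerably cleaner. A minor remark: the theorem only asks for $1\leq k<n$, so your treatment of $k=0$ is superfluous, though harmless.
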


\begin{proof} 
	First we consider the case where $k=1$ and then we will cover the case where  $2 \leq k<n$. 

Let $k=1$ and $n\geq 3$. The flow-up class $\G_1$ exists on $(C_n,L)$ if and only if the following system of congruences is satisfied by some $g_2,g_3,...,g_n$
\[ \begin{cases} 
	0 \equiv g_2 \bmod \ell_1 \\
	g_2 \equiv g_3 \bmod \ell_2 \\
	\vdots \\
	g_{n-1} \equiv g_n \bmod \ell_{n-1} \\
	g_n \equiv 0 \bmod \ell_n \\
\end{cases} \]
The first congruence $0 \equiv g_2 \bmod \ell_1$ is satisfied if $g_2 = p_1 \ell_1$ for $p_1 \in \Z$. If we choose $g_2=p_1 \ell_1$ then the second congruence is satisfied if $g_3 = p_1 \ell_1 + p_2 \ell_2$ for $p_2 \in \Z$. We can see a pattern: the $i$-th congruence is satified if $g_{i+1} = p_1 \ell_1 + \cdots p_i \ell_i$ for $p_1,...,p_i \in \Z$. Choose $g_{i+1} = p_1 \ell_1 + \cdots p_i \ell_i$ for each $i$ in $1 \leq i \leq n-2$. Then the first $n-2$ congruences are met. In particular, 
\[ g_{n-1} = p_1 \ell_1 + \cdots + p_{n-2} \ell_{n-2}. \]
Factor out $\gcd(\ell_1,...,\ell_{n-2})$ from the right side.
\[ g_5 = \gcd(\ell_1,...,\ell_{n-2})\left(\dfrac{p_1 \ell_1}{\gcd(\ell_1,...,\ell_{n-2})} + \cdots + \dfrac{p_{n-2} \ell_{n-2}}{\gcd(\ell_1,...,\ell_{n-2})}\right) \]
We know $g_{n-1}$ is a multiple of $\gcd(\ell_1,...,\ell_{n-2})$, which is the same as saying
\begin{equation} g_{n-1} \equiv 0 \bmod \gcd(\ell_1,...,\ell_{n-2}). \end{equation}
So far we have satisfied the first $n-2$ congruences. We still need to find a solution for the last two congruences.  

A solution $g_n$ exists if and only if 
\[ g_{n-1} \equiv 0 \bmod \gcd(\ell_{n-1},\ell_n) \]
by the Chinese Remainder Theorem. Choose $g_{n-1}$ to be a multiple of \newline $\lcm(\gcd(\ell_1,...,\ell_{n-2}) , \gcd(\ell_{n-1},\ell_n))$.

Then $g_{n-1}$ satisfies the congruences in equations (1) and (2). Because $g_{n-1}$ satisfies the first of these congruences a solution $g_n$ exists to the last two congruences in the original system.

We constructed $g_2,...,g_{n-2}$ to meet the first $n-3$ congruences and we constructed $g_{n-1}$ to satisfy  both the $(n-2)$th congruence and the congruence \newline $g_{n-1} \equiv 0 \bmod \gcd(\ell_{n-1},\ell_n)$. Because $g_{n-1}$ satisfies the latter congruence, the Chinese Remainder Theorem guarantees we can find $g_n$ that satisfies the $(n-1)$th and $n$-th congruences. Thus a solution exists to the entire system, in other words a flow-up class $\G_1$ exists on $(C_n,L)$.

Next consider the case where $n\geq 3$ and $2 \leq k <n$. We proved in Proposition \ref{OneNonzeroLabel} that the flow-up class $\G_2$ exists for $n=3$. This is our base case. Assume the flow-up classes $\G_2,...,\G_{n-1}$ exist for $(C_{n-1},L')$ where $L'=(\ell_2,...,\ell_n)$. Additionally, $\G_1$ also exists on this $n$-cycle by the earlier part of this proof. Add a leading zero to each of the flow-up classes $\G_1,...,\G_{n-1}$ on $(C_{n-1},L')$. Lemma \ref{EdgeAddition} assures us these new vectors are splines on $(C_{n},L)$. Moreover the additional leading zero makes these flow-up classes $\G_2,...,\G_n$. Thus we conclude by induction that for all $n \geq 3$ and $2 \leq k <n$ the flow-up class $\G_k$ exists on $(C_n,L)$.

\end{proof}

In Section 3 we explicitly identified the smallest leading element of a flow-up class on triangles. Similarly we can explicitly identify the smallest leading element of each flow-up class on $n$-cycles. First we will address the flow-up class $\G_1$. Then we build upon that to find the smallest leading element of flow-up classes in general.

\begin{theorem}[Smallest Leading Element of $\G_1$]
\label{ncycles:SmallestLeadingElement}
Fix a cycle with edge labels $(C_n,L)$. Fix $n\geq 3$. Let $\G_1=(0,g_2,...,g_n)$ be a flow-up class on $(C_n,L)$. The leading element $g_2$ is a multiple of $\lcm(\ell_1,\gcd(\ell_2,...,\ell_n))$ and $g_2=\lcm(\ell_1,\gcd(\ell_2,...,\ell_n))$ is the smallest positive value such that $\G_1$ is a spline.
\end{theorem}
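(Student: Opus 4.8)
The plan is to characterize exactly which integers $g_2$ can appear as the leading entry of a flow-up class $\G_1=(0,g_2,\dots,g_n)$, in direct analogy with Theorem~\ref{Tri:FlowUpClass}, and then read off the smallest positive one. By definition $\G_1$ is a spline precisely when
\[
\begin{cases}
0 \equiv g_2 \bmod \ell_1 \\
g_2 \equiv g_3 \bmod \ell_2 \\
\vdots \\
g_{n-1} \equiv g_n \bmod \ell_{n-1} \\
g_n \equiv 0 \bmod \ell_n.
\end{cases}
\]
The first congruence contributes the single constraint $\ell_1 \mid g_2$, while the remaining $n-1$ congruences --- the path of edges running from $v_2$ counterclockwise through $v_n$ back to the zero label at $v_1$ --- will be shown to contribute exactly the constraint $\gcd(\ell_2,\dots,\ell_n)\mid g_2$. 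Since both constraints ask that $g_2$ be divisible by a fixed modulus, combining them yields $\lcm(\ell_1,\gcd(\ell_2,\dots,\ell_n)) \mid g_2$, and the smallest positive such $g_2$ is $\lcm(\ell_1,\gcd(\ell_2,\dots,\ell_n))$ itself.

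The heart of the argument is the claim that the path congruences are solvable for $g_3,\dots,g_n$ if and only if $\gcd(\ell_2,\dots,\ell_n)\mid g_2$. \textbf{Necessity} is a short telescoping computation: writing $g_2 = (g_2-g_3)+(g_3-g_4)+\cdots+(g_{n-1}-g_n)+(g_n-0)$ exhibits $g_2$ as a sum of terms divisible by $\ell_2,\ell_3,\dots,\ell_n$ respectively, hence divisible by their gcd. \textbf{Sufficiency} is the step I expect to be the main obstacle, since it requires producing the intermediate labels explicitly. I would write $\gcd(\ell_2,\dots,\ell_n)=\sum_{i=2}^n b_i\ell_i$ by B\'ezout, so that $g_2 = \sum_{i=2}^n c\,b_i\ell_i$ where $g_2 = c\cdot\gcd(\ell_2,\dots,\ell_n)$, and then peel off one multiple at a time, setting $g_{j+1}=g_j - c\,b_j\ell_j$ for $j=2,\dots,n-1$; each step satisfies the corresponding congruence by construction and leaves $g_n = c\,b_n\ell_n \equiv 0 \bmod \ell_n$, so the final congruence holds as well. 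Alternatively this solvability can be obtained by induction on $n$, collapsing the last two edges: the Chinese Remainder Theorem guarantees a $g_n$ with $g_n\equiv g_{n-1}\bmod\ell_{n-1}$ and $g_n\equiv 0\bmod\ell_n$ exactly when $\gcd(\ell_{n-1},\ell_n)\mid g_{n-1}$, which replaces the last two edges by a single edge labeled $\gcd(\ell_{n-1},\ell_n)$ and reduces to the $(n-1)$-cycle case, with the triangle of Theorem~\ref{Tri:SmallestLeadingElement} as the base case (using $\gcd(\ell_2,\dots,\ell_{n-2},\gcd(\ell_{n-1},\ell_n))=\gcd(\ell_2,\dots,\ell_n)$).

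Finally I would assemble the two pieces. The divisibility claims show every admissible $g_2$ is a multiple of $\lcm(\ell_1,\gcd(\ell_2,\dots,\ell_n))$, so no positive leading entry can be smaller than this lcm. Conversely, taking $g_2=\lcm(\ell_1,\gcd(\ell_2,\dots,\ell_n))$ satisfies $\ell_1\mid g_2$, so the first congruence holds, and $\gcd(\ell_2,\dots,\ell_n)\mid g_2$, so by the sufficiency step the path can be completed to a genuine spline. Hence this value is attained, and being the least positive multiple of $\lcm(\ell_1,\gcd(\ell_2,\dots,\ell_n))$ it is the smallest positive leading element, which is exactly the assertion of the theorem.
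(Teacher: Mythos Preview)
Your proof is correct. Your primary route---the telescoping identity $g_2=\sum_{i=2}^n(g_i-g_{i+1})$ for necessity, together with an explicit B\'ezout construction of the intermediate labels for sufficiency---is more direct than the paper's argument. The paper proceeds purely by induction on $n$: given a flow-up class on $C_{n+1}$, it applies the Chinese Remainder Theorem to the final two congruences to replace the edges $\ell_n,\ell_{n+1}$ by a single edge $\gcd(\ell_n,\ell_{n+1})$, landing in the $n$-cycle case, with the triangle as base. This is exactly the alternative you sketch in your second approach to sufficiency. What your telescoping/B\'ezout argument buys is an explicit, non-inductive description of which $g_2$ occur and a concrete formula for completing the spline; what the paper's inductive approach buys is that it stays entirely within the CRT framework already established for triangles, so no new ingredient (B\'ezout coefficients) is introduced.
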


\begin{proof}
We will prove this by induction. The base case $n=3$ was proven in Theorem 3.2.

Let $n>3$. For any edge-labeled cycle $(C_n,L)$, the flow-up class $\G_1=(0,g_2,...,g_n)$ exists on $(C_n,L)$ by Theorem 4.3. Assume the induction hypothesis that $g_2$ is a multiple of $\lcm(\ell_1,\gcd(\ell_2,...,\ell_n))$ and $g_2=\lcm(\ell_1,\gcd(\ell_2,...,\ell_n))$ is the smallest positive value such that $\G_1$ is a spline. Fix a cycle with edge labels $(C_{n+1},L')$ with $L'=\{\ell_1,...,\ell_{n+1}\}$. The flow-up class $\G_1'=(0,g_2',...,g_{n+1}')$ exists on $(C_{n+1},L')$ by Theorem 4.3. This means the following system of congruences is satisfied:
\[ \begin{cases}
g_2' \equiv 0 \bmod \ell_1 \\
g_3' \equiv g_2' \bmod \ell_2 \\
\vdots \\
g_n' \equiv g_{n-1}' \bmod \ell_{n-1} \\
g_{n+1}' \equiv g_n' \bmod \ell_n \\
g_{n+1}' \equiv 0 \bmod \ell_{n+1}. \\
\end{cases} \]
Because the last two congruences are satisfied,  $g_n' \equiv 0 \bmod \gcd(\ell_n,\ell_{n+1})$ by the Chinese Remainder Theorem. Consider this congruence and the first $n-1$ congruences in the system above. Together these congruences form a flow-up class $\G_1''=(0,g_2',...,g_n')$ on $(C_n,L'')$ with edge labels $L''=(\ell_1,...,\gcd(\ell_n,\ell_{n+1}))$. By the induction hypothesis, for some $s\in \Z$ 
\[ g_2' = s \lcm(\ell_1,\gcd(\ell_2,...,\gcd(\ell_n,\ell_{n+1}))) = s \lcm (\ell_1,\gcd(\ell_2,...,\ell_{n+1})) \]
and $g_2' = \lcm(\ell_1,\gcd(\ell_1,...,\ell_{n+1}))$ is the smallest positive value that satisfies the edge conditions at the second vertex.
\[ \begin{cases}
g_2' \equiv 0 \bmod \ell_1 \\
g_3' \equiv g_2' \bmod \ell_2 \\
\vdots \\
g_n' \equiv g_{n-1}' \bmod \ell_{n-1} \\
g_n' \equiv 0 \bmod \gcd(\ell_n,\ell_{n+1}) \\
\end{cases} \]
This system is a flow-up class $\G_1''=(0,g_2',...,g_n')$ on $(C_n,L')$ with \\ $L'=(\ell_1,...,\ell_{n-1},\gcd(\ell_n,\ell_{n+1}))$. Moreover by induction there exists a flow-up class $\G_1'$ on $(C_n,L'')$ with $g_2' =\lcm(\ell_1,\gcd(\ell_1,...,\ell_{n+1}))$.  This class also satisfies \newline $g_n' = 0 \bmod \gcd (\ell_n,\ell_{n+1})$ so there is at least one solution $g_{n+1}'$ to the system for $\G_1'$ on $(C_{n+1},L')$ with the  same values for $g_2', g_3',..., g_n'$.  This is a flow-up class $\G_1'$ satisfying $g_2' = \lcm(\ell_1,\gcd(\ell_1,...,\ell_{n+1}))$ as desired. 
\end{proof}

\begin{theorem}[Smallest Leading Element of $\G_k$] 
\label{ncycles:SmallestLeadingElementGk}
Fix a cycle with edge labels $(C_n,L)$. Fix $n\geq 3$ and $k$ with $3 \leq k <n$. Let $\G_{k-1}=(0,...,0,g_k...,g_n)$ be a flow-up class on $(C_n,L)$. The leading element $g_k$ is a multiple of $\lcm(\ell_{k-1},\gcd(\ell_k,...,\ell_n))$ and $g_k=\lcm(\ell_{k-1},\gcd(\ell_k,...,\ell_n))$ is the smallest positive value that satisfies the $v_k$ edge conditions.
\end{theorem}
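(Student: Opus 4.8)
The plan is to reduce the statement to the already-established $\G_1$ case via the contraction lemma, rather than running a fresh induction on $n$. Since $k \geq 3$, the flow-up class $\G_{k-1}=(0,\dots,0,g_k,\dots,g_n)$ carries at least two leading zeroes, so Lemma \ref{EdgeContraction} applies: contracting the edge labeled $\ell_1$ produces a flow-up class with one fewer leading zero on the cycle with edge labels $(\ell_2,\dots,\ell_n)$. I would iterate this $k-2$ times, contracting $e_1,e_2,\dots,e_{k-2}$ in turn. Each contraction strips off one leading zero together with one edge label, so the net effect is to collapse the zero-labeled vertices $v_1,\dots,v_{k-1}$ into a single zero vertex and to leave behind the flow-up class $\G_1=(0,g_k,\dots,g_n)$ on the shorter cycle $(C_{n-k+2},L'')$ with edge labels $L''=(\ell_{k-1},\ell_k,\dots,\ell_n)$.

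Next I would apply Theorem \ref{ncycles:SmallestLeadingElement} to this shorter cycle. Reading that theorem with $\ell_{k-1}$ playing the role of the first edge label and $\ell_k,\dots,\ell_n$ playing the role of the remaining labels, it says the leading element $g_k$ must be a multiple of $\lcm(\ell_{k-1},\gcd(\ell_k,\dots,\ell_n))$ and that $g_k=\lcm(\ell_{k-1},\gcd(\ell_k,\dots,\ell_n))$ is the smallest positive value for which $\G_1$ is a spline on $(C_{n-k+2},L'')$. This is exactly the claimed formula, so the divisibility half of the statement follows at once.

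To close the argument I would verify that this minimal value is actually attainable back on $(C_n,L)$, not merely on the contracted cycle. For this I would run Lemma \ref{EdgeAddition} in the opposite direction, reinserting the $k-2$ contracted zero vertices and their edges one at a time. The lemma guarantees that a spline $\G_1$ on $(C_{n-k+2},L'')$ achieving $g_k=\lcm(\ell_{k-1},\gcd(\ell_k,\dots,\ell_n))$ lifts to a spline $\G_{k-1}$ on $(C_n,L)$ with the same entries $g_k,\dots,g_n$, since each reinserted congruence has the form $0 \equiv 0 \bmod \ell_i$ and is vacuously satisfied. Hence the minimal $g_k$ is realized by an honest flow-up class on the original cycle, so ``smallest positive value such that $\G_1$ is a spline'' and ``smallest positive value satisfying the $v_k$ edge conditions'' coincide.

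The main obstacle I anticipate is purely bookkeeping: keeping the edge relabeling straight through the $k-2$ contractions so that the formula $\lcm(\ell_1,\gcd(\ell_2,\dots,\ell_m))$ of Theorem \ref{ncycles:SmallestLeadingElement} maps correctly onto $\lcm(\ell_{k-1},\gcd(\ell_k,\dots,\ell_n))$, and confirming at each stage that at least two leading zeroes remain so that Lemma \ref{EdgeContraction} stays applicable (which holds because we halt once a single leading zero is left). No genuinely new idea beyond the contraction and addition correspondence appears to be needed.
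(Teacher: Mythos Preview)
Your proposal is correct and follows essentially the same approach as the paper: iterate Lemma~\ref{EdgeContraction} to strip off the leading zeros until only one remains, then invoke Theorem~\ref{ncycles:SmallestLeadingElement} on the shorter cycle. The paper handles the attainability direction by asserting the biconditional ``$\G_{k-1}$ is a spline on $(C_n,L)$ iff $\G_1$ is a spline on the contracted cycle,'' whereas you make this explicit via Lemma~\ref{EdgeAddition}; otherwise the arguments are the same.
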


\begin{proof}
Let $\G_k=(0,...,0,g_{k+1},...,g_n)$ for $3 \leq k <n$ be a spline on $(C_n,L)$. Contract the first $k-1$ edges. The resulting graph is $(C_{n-k-1},L')$ with $L'=(\ell_k,...,\ell_n)$ and the resulting vector is $(0,g_{k+1},...,g_n)$. By multiple applications of Lemma \ref{EdgeContraction}, this vector is a spline. In other words, the following system of congruences is satisfied.
\[ \begin{cases} 
g_{k+1} \equiv 0 \bmod \ell_k \\
\vdots \\
g_n \equiv g_{n-1} \bmod \ell_{n-1} \\
g_n \equiv 0 \bmod \ell_n \\ 
\end{cases} \]
This system represents a $\G_1$ spline on $(C_{n-k-1},L')$. The leading element $g_{k+1}$ is a multiple of $\lcm(\ell_k,\gcd(\ell_{k+1},...,\ell_n))$ and $g_{k+1}=\lcm(\ell_k,\gcd(\ell_{k+1},...,\ell_n))$ is the smallest positive value that satisfies the $v_2$ edge conditions on $(C_{n-k-1},L')$. The 
the original spline $\G_k=(0,...,0,g_{k+1},...,g_n)$ satisfies the edge conditions on $(C_n,L)$ if and only if $\G_1=(0,g_{k+1},...,g_n)$ satisfies the edge conditions on $(C_{n-k-1},L')$ so the smallest leading element of $\G_1$ is also the smallest leading element of $\G_k$. Thus the leading element $g_{k+1}$ on $(C_n,L)$ must be a multiple of $\lcm(\ell_k,\gcd(\ell_{k+1},...,\ell_n))$ and $g_{k+1}=\lcm(\ell_k,\gcd(\ell_{k+1},...,\ell_n))$ is the smallest positive value that satisfies the $v_{k+1}$ edge conditions on $(C_n,L)$.
\end{proof}
 
\begin{theorem}[Existence of Smallest Flow-Up Classes]
\label{ncycles:SmallestFlowUpClasses}
Fix a cycle with edge labels $(C_n,L)$. Fix $n\geq 3$ and $k$ where $3 \leq k <n$. There exists a smallest flow-up class \newline $\G_k=(0,...,0,g_{k+1},...,g_n)$  on $(C_n,L)$.
\end{theorem}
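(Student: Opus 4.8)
The plan is to follow the template of Theorem~\ref{Tri:SmallestFlowUpClass}: pin the leading entry down at the smallest value already computed, and then select the remaining entries one vertex at a time by Well-Ordering. First I would strip off the leading zeros using edge contraction. Given a flow-up class $\G_k=(0,\dots,0,g_{k+1},\dots,g_n)$ on $(C_n,L)$, the first $k-1$ edges each join two zero-labeled vertices, so repeated application of Lemma~\ref{EdgeContraction} turns $\G_k$ into a spline $(0,g_{k+1},\dots,g_n)$ on the cycle $(C_{n-k+1},L')$ with $L'=(\ell_k,\dots,\ell_n)$; this is a $\G_1$ class, and its nonzero entries are literally the nonzero entries of $\G_k$. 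Since Lemma~\ref{EdgeAddition} shows the correspondence is reversible, it suffices to build a smallest $\G_1$ on $(C_{n-k+1},L')$ and then prepend $k-1$ zeros.

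For that $\G_1$ class the entries must satisfy
\[
\begin{cases}
g_{k+1}\equiv 0 \bmod \ell_k \\
g_{k+2}\equiv g_{k+1}\bmod \ell_{k+1} \\
\vdots \\
g_n\equiv g_{n-1}\bmod \ell_{n-1} \\
g_n\equiv 0 \bmod \ell_n.
\end{cases}
\]
By Theorem~\ref{ncycles:SmallestLeadingElementGk} I would fix the leading entry at its smallest positive value $g_{k+1}=\lcm(\ell_k,\gcd(\ell_{k+1},\dots,\ell_n))$, and Theorem~\ref{ncycles:FlowUpClasses} guarantees this partial labeling extends to a full flow-up class, so the set of admissible completions is nonempty. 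I would then process $v_{k+2},\dots,v_n$ in order: at each vertex the integers that satisfy the congruence with the already-chosen predecessor \emph{and} still permit a valid labeling of the remaining vertices form a nonempty coset of a subgroup of $\Z$, so Well-Ordering supplies a smallest nonnegative representative, which I take as that entry. Closing the cycle at $v_n$, the final entry is the smallest positive simultaneous solution of its two congruences by the Chinese Remainder Theorem. Prepending $k-1$ zeros via Lemma~\ref{EdgeAddition} returns the smallest flow-up class $\G_k$ on $(C_n,L)$.

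The step I expect to be the main obstacle is verifying that each greedy choice leaves the rest of the system solvable, so that minimizing an early entry never strands a later vertex. This is precisely the back-propagating divisibility already exploited in Theorem~\ref{ncycles:FlowUpClasses}: solvability at $v_n$ forces $g_{n-1}\equiv 0\bmod\gcd(\ell_{n-1},\ell_n)$, which constrains $g_{n-2}$, and so on down the tail. I would handle this by building the solvability requirement into the admissible set at each vertex from the outset, and by justifying that this set is a nonempty coset through the Chinese Remainder Theorem applied to the tail subcycle (equivalently, by recognizing the tail as a smaller $\G_1$ problem governed by Theorem~\ref{ncycles:FlowUpClasses}). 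Once admissibility is phrased this way, every entry is chosen as small as the earlier choices permit, and the resulting class is the smallest flow-up class $\G_k$.
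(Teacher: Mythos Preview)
Your construction is sound and lands on the same object as the paper, but the paper gets there by a different mechanism. Instead of contracting down to a $\G_1$ problem and then greedily selecting each entry against a tail-solvability congruence, the paper works directly on $(C_n,L)$ and exploits the module structure: for every $j\ge k$ it takes a flow-up class $\HH_j$ with the smallest leading entry (supplied by Theorems~\ref{ncycles:FlowUpClasses} and~\ref{ncycles:SmallestLeadingElementGk}), forms the combination
\[
\HH=\HH_k-c_{k+1}\HH_{k+1}-\cdots-c_{n-1}\HH_{n-1},
\]
and then fixes the coefficients $c_{k+1},c_{k+2},\ldots$ in order so that each successive entry of $\HH$ is the least nonnegative integer in its residue class. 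Because the leading entry of $\HH_j$ is exactly $\lcm(\ell_j,\gcd(\ell_{j+1},\dots,\ell_n))$, varying $c_j$ moves the $(j{+}1)$st entry of $\HH$ through precisely the coset your CRT back-propagation identifies, so the two procedures agree entry by entry.

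What each buys: your route keeps the number theory explicit and makes the ``why is the tail still solvable after a greedy choice'' question the centerpiece, which you correctly resolve by folding the condition $g_j\equiv 0\bmod\gcd(\ell_j,\dots,\ell_n)$ into the admissible set. The paper's route hides that same question inside the already-established existence of the higher $\HH_j$'s, trading your CRT bookkeeping for a one-line appeal to the $\Z$-module structure of splines. Both arguments stop at the same informal level when it comes to verifying that the resulting class is entrywise minimal in the sense of the definition of smallest flow-up class; neither proves more on that point than the other.
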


\begin{proof}
Start with an edge-labeled $n$-cycle $(C_n,L)$ with $n\geq 3$. Let $1 \leq k <n$ and let $\HH_k=(0,...,0,h_{k,k+1},...,h_{k,n})$ be the flow-up class $\G_k$ on $(C_n,L)$ with the smallest leading entry. Theorems 4.3 and 4.5 guarantee that such flow-up classes exists. 

Consider the linear combination of flow-up classes with smallest leading entries
\[ \HH = \HH_k - c_{k+1} \HH_{k+1}  - \cdots - c_{n-1}\HH_{n-1} \]
for all $c_{k+1},...,c_{n-1} \in \Z$. For each $i$ where $1<i<n$  consider in turn the $(k+i)$th vertex label, which is 
\[ h_{k,k+i} - c_{k+1} h_{k+1,k+i} - \cdots - c_{k+i} h_{k+i,k+i+1} \]
By Well-Ordering one vertex label is the smallest nonnegative integer. Pick that integer for the $(k+i)$th vertex label and fix the coefficient $c_{k+1}$ that give that vertex label. Then the leading entry of $\HH$ is the smallest positive integer and all remaining entries are the smallest nonnegative integers such that $\HH$ is a spline. Thus $\HH$ is  the smallest flow-up class $\G_k$ by definition. 


\end{proof}

The smallest flow-up classes form a basis for $n$-cycles. We state this in the following theorem.

\begin{theorem}[Basis for $n$-Cycles]
\label{ncycles:Basis}
	The smallest flow-up classes $\G_0, \G_1, \dots, \G_{n-1}$ form a basis over the integers for the set of splines $\setn$.
\end{theorem}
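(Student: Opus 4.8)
The plan is to prove the theorem by mirroring the argument used for triangles in Theorem~\ref{Tri:Basis}, now leveraging the machinery developed for $n$-cycles. The statement has two halves: \emph{linear independence} of $\{\G_0,\dots,\G_{n-1}\}$ and \emph{spanning}. Independence is immediate and requires no real work: each smallest flow-up class $\G_k$ has exactly $k$ leading zeroes, so the leading nonzero entries sit in distinct positions. Any nontrivial integer combination $\sum_k c_k \G_k$ has a lowest index $k_0$ with $c_{k_0}\neq 0$; the $(k_0{+}1)$st entry of the combination equals $c_{k_0}$ times the (positive) leading entry of $\G_{k_0}$, which is nonzero. Hence the combination cannot be the zero vector, giving independence.

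The substance is the spanning step, which I would handle by a descending reduction identical in spirit to the triangle proof but iterated down the cycle. Given an arbitrary spline $\y=(y_1,\dots,y_n)$, first subtract $y_1\G_0$ to kill the first entry, producing a spline $\y^{(1)}$ whose first entry is $0$. This is legitimate because the set of splines is a module (cited from Gilbert--Polster--Tymoczko) and $\G_0=(1,\dots,1)$. Now $\y^{(1)}$ is a flow-up class with one leading zero, so by Theorem~\ref{ncycles:SmallestLeadingElement} its leading (second) entry is an integer multiple of $\lcm(\ell_1,\gcd(\ell_2,\dots,\ell_n))$, which is precisely the leading entry of the smallest flow-up class $\G_1$. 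Therefore I can choose $c_1\in\Z$ with $\y^{(1)}-c_1\G_1$ having its second entry equal to zero as well. I would then iterate: at stage $k$, having produced a spline $\y^{(k)}$ with $k$ leading zeroes, Theorem~\ref{ncycles:SmallestLeadingElementGk} tells me its leading entry (in position $k{+}1$) is a multiple of the leading entry of $\G_k$, so a unique $c_k\in\Z$ clears that entry, yielding $\y^{(k+1)}$ with $k{+}1$ leading zeroes. After $n-1$ steps the final entry is cleared using $\G_{n-1}$ (whose leading entry is $\lcm(\ell_{n-1},\ell_n)$, by Proposition~\ref{OneNonzeroLabel}), leaving the zero vector. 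Unwinding gives $\y = y_1\G_0 + c_1\G_1 + \cdots + c_{n-1}\G_{n-1}$ with all coefficients in $\Z$.

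The one point that needs genuine care, and which I expect to be the main obstacle, is justifying that at each stage the partially-reduced vector $\y^{(k)}$ is again a \emph{flow-up class of the right type} so that the smallest-leading-element theorems actually apply. Concretely, after subtracting off the lower classes, the leading nonzero entry of $\y^{(k)}$ must be shown to lie in the correct coset/divisibility class — namely that it is a multiple of $\lcm(\ell_k,\gcd(\ell_{k+1},\dots,\ell_n))$ — before I can divide by the leading entry of $\G_k$. This divisibility is exactly what Theorem~\ref{ncycles:SmallestLeadingElementGk} guarantees for \emph{any} flow-up class with $k$ leading zeroes, and since $\y^{(k)}$ is a spline (module closure) with $k$ leading zeroes, it qualifies; but I would state this invariant explicitly as the inductive hypothesis to keep the reduction airtight.

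I would therefore organize the write-up as a short independence paragraph followed by an induction on the number of leading zeroes for the spanning direction, maintaining the invariant ``$\y^{(k)}$ is a spline with exactly $k$ leading zeroes'' at each step, and concluding that the coefficients produced are integers because each division is exact by the cited divisibility results.
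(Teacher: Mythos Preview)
Your argument is correct, and it is genuinely different from the paper's. You prove spanning by a direct Gaussian-elimination-style descent: starting from an arbitrary spline $\y$, you successively subtract integer multiples of $\G_0,\G_1,\dots,\G_{n-1}$, using Theorems~\ref{ncycles:SmallestLeadingElement} and~\ref{ncycles:SmallestLeadingElementGk} (and Proposition~\ref{OneNonzeroLabel} at the final step) to guarantee that each leading entry is divisible by the corresponding leading entry of the smallest flow-up class, so the coefficients are integers. The paper instead proves spanning by induction on $n$: after peeling off $\G_0$ and $\G_1$ to obtain a spline with two leading zeroes, it contracts the first edge (Lemma~\ref{EdgeContraction}) to land on $(C_n,L')$, invokes the inductive hypothesis there, and then pushes the resulting combination back to $(C_{n+1},L)$ via Lemma~\ref{EdgeAddition}, checking that this ``add a leading zero'' map sends smallest flow-up classes to smallest flow-up classes. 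Your route is shorter and more transparent once Theorems~\ref{ncycles:SmallestLeadingElement} and~\ref{ncycles:SmallestLeadingElementGk} are in hand; the paper's route has the virtue of making explicit the structural correspondence between flow-up bases on $C_n$ and $C_{n+1}$, but at the cost of the auxiliary map $f$ and the extra verification that $f$ preserves smallest classes. One small wording fix: your inductive invariant should be ``$\y^{(k)}$ is a spline with \emph{at least} $k$ leading zeroes,'' not ``exactly $k$,'' since a lucky cancellation could zero out further entries; in that case you simply take $c_k=0$ and proceed.
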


\begin{proof} 
		The flow-up classes $\G_0,...,\G_{n-1}$ are linearly independent because each has a different number of leading zeroes.  
	
		Next we need to show that any spline in the set of splines on $(C_n,L)$ can be written as an integer linear combination of the $\G_0, \G_1, \dots, \G_{n-1}$.  

	The smallest flow-up classes $\G_0, \G_1, \G_2$  on $(C_3,L)$ form a basis over the integers for the set of splines on $(C_3,L)$ by Theorem \ref{Tri:Basis}. In particular, this means that any spline on $(C_3,L)$ can be written as a linear combination of the smallest flow-up classes. 
		
	Assume that any spline on $(C_n,L)$ can be written as a linear combination of the smallest flow-up classes $\G_0,...,\G_{n-1}$. 

	Let $\y=(y_1,...,y_{n+1})$ be a spline on $(C_{n+1},L)$. Then
	\[ \y' = \y - y_1 \G_0 = \y-y_1 \begin{pmatrix}
			1 \\
			\vdots \\
			1 \\
			1 \\
		\end{pmatrix}
		=
		\begin{pmatrix}
			y_{n+1}-y_1 \\
			\vdots \\
			y_2 - y_1 \\
			0 \\
		\end{pmatrix} =
		\begin{pmatrix}
			y_{n-1}' \\
			\vdots \\
			y_2' \\
			0 \\
		\end{pmatrix} \]

		is also a spline on $(C_{n+1},L)$ because the set of splines is a module.  Since 
\[ \y = y_1\G_0 + \y' \]
so we have written part of $\y$ as a linear combination of one flow-up class. Continuing on, we have the smallest flow-up class $\G_1=(0,g_2,...,g_{n+1})$ with \newline $g_2 = \lcm(\ell_1,\gcd(\ell_2,...,\ell_{n+1}))$.  
		Then $y_2' = s g_2$ for some $s\in \Z$ by Theorem \ref{ncycles:SmallestLeadingElement} Then 
		\[ \y'' = \y' - s \G_1 =
			\begin{pmatrix}
				y_{n+1}' \\
				\vdots \\
				y_3' \\
				y_2' \\
				0 \\
			\end{pmatrix}
		- s
			\begin{pmatrix}
				g_{n+1} \\
				\vdots \\
				g_3 \\
				g_2\\
				0 \\
			\end{pmatrix}
		=
			\begin{pmatrix}
				y_{n+1}' - s g_{n+1} \\
				\vdots \\
				y_3' - s g_3 \\
				0\\
				0 \\
			\end{pmatrix}
		=
			\begin{pmatrix}
				y_{n+1}'' \\
				\vdots \\
				y_3'' \\
				0\\
				0 \\
			\end{pmatrix} \]
		
		is also a spline on $(C_{n+1},L)$. 

We have now written part of $\y$ as a linear combination of two flow-up classes and the spline $\y''$.
\[ \y=y_1 \G_0 + s \G_1 + \y'' \]
We need to write spline $\y''$ as a linear combination of flow-up classes. This step involves a trick. We will define a helper function $f$ and leverage Lemma \ref{EdgeContraction} to use the smallest flow-up classes on $(C_n,L')$, which exist by our induction hypothesis, to construct smallest flow-up classes on $(C_{n+1},L)$.

		Define $(C_n,L')$ with edge labels $L'=(\ell_2,..., \ell_{n+1})$. Then  $(0, y_3'',..., y_{n+1}'')$ is a spline on $(C_n,{L'})$ by Lemma \ref{EdgeContraction}.  By our induction hypothesis,  this new spline  $(0, y_3'',..., y_{n+1}'')$ can be written as a linear combination of the set of smallest flow-up classes $\G_0',...,\G_{n-1}'$ on $(C_n,L')$. In other words, for $c_0,...,c_{n-1} \in \Z$, we can write
\[ \begin{pmatrix}
	y_{n+1}'' \\
	\vdots \\
	y_3'' \\
	0 \\
\end{pmatrix}
= c_0\G_0' + c_1 \G_1' + \cdots + c_{n-1}\G_{n-1}'. \]
The flow-up classes $\G_1',...,\G_{n-1}'$ have zero as their first entry, so $c_0=0$ and 
\[ \begin{pmatrix}
	y_{n+1}'' \\
	\vdots \\
	y_3'' \\
	0 \\
\end{pmatrix}
= c_1 \G_1' + c_2 \G_2' + \cdots  + c_{n-1} \G_{n-1}'. \]

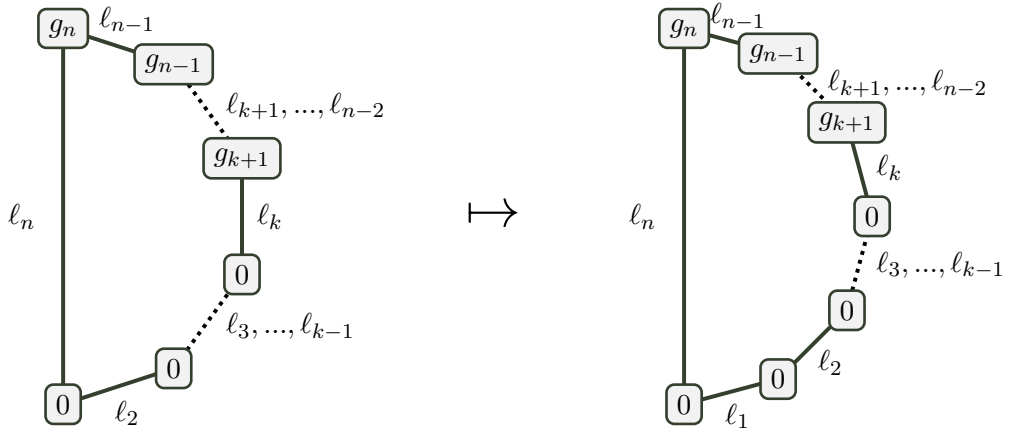
\begin{figure}
\begin{tikzpicture}

\begin{scope}
	\pgfmathsetmacro{\r}{2.5}
	\pgfmathsetmacro{\ro}{2.75}
	\pgfmathsetmacro{\edge}{36}

	\node (a) at ({-90 + \edge*0}:\r) {};
	\node (b) at ({-90 + \edge*1}:\r) {};
	\node (c) at ({-90 + \edge*2}:\r) {};
	\node (d) at ({-90 + \edge*3}:\r) {};
	\node (e) at ({-90 + \edge*4}:\r) {};
	\node (f) at ({-90 + \edge*5}:\r) {};

	\draw[edge] (a)--(b);
	\draw[dashededge] (b)--(c);
	\draw[edge] (c)--(d);
	\draw[dashededge] (d)--(e);
	\draw[edge] (e)--(f);
	\draw[edge] (f)--(a);

	\node[edgelabel] at ({-72 + \edge*0}:\ro) {$\ell_2$};
	\node[edgelabel,right] at ({-72 + \edge*1}:\r) {$\ell_3,...,\ell_{k-1}$};
	\node[edgelabel] at ({-72 + \edge*2}:\ro) {$\ell_k$};
	\node[edgelabel,right] at ({-72 + \edge*3}:\r) {$\ell_{k+1},...,\ell_{n-2}$};
	\node[edgelabel] at ({-72 + \edge*4}:\ro) {$\ell_{n-1}$};
	\node[edgelabel] at (180:\ro /5) {$\ell_n$};

	\node[rectanglevertex] at (a) {$0$};
	\node[rectanglevertex] at (b) {$0$};
	\node[rectanglevertex] at (c) {$0$};
	\node[rectanglevertex] at (d) {$g_{k+1}$};
	\node[rectanglevertex] at (e) {$g_{n-1}$};
	\node[rectanglevertex] at (f) {$g_n$};
\end{scope}

\begin{scope}[xshift=2.25in]
	\node[font=\fontsize{22}{22}] at (0,0) {$\mapsto$};
\end{scope}

\begin{scope}[xshift=3.25in]
	\pgfmathsetmacro{\r}{2.5}
	\pgfmathsetmacro{\ro}{2.75}
	\pgfmathsetmacro{\edge}{30}

	\node (a) at ({-90 + \edge*0}:\r) {};
	\node (b) at ({-90 + \edge*1}:\r) {};
	\node (c) at ({-90 + \edge*2}:\r) {};
	\node (d) at ({-90 + \edge*3}:\r) {};
	\node (e) at ({-90 + \edge*4}:\r) {};
	\node (f) at ({-90 + \edge*5}:\r) {};
	\node (g) at ({-90 + \edge*6}:\r) {};

	\draw[edge] (a)--(b);
	\draw[edge] (b)--(c);
	\draw[dashededge] (c)--(d);
	\draw[edge] (d)--(e);
	\draw[dashededge] (e)--(f);
	\draw[edge] (f)--(g);
	\draw[edge] (g)--(a);

	\node[edgelabel] at ({-75 + \edge*0}:\ro) {$\ell_1$};
	\node[edgelabel] at ({-75 + \edge*1}:\ro) {$\ell_2$};
	\node[edgelabel,right] at ({-75 + \edge*2}:\r) {$\ell_3,...,\ell_{k-1}$};
	\node[edgelabel,right] at ({-75 + \edge*3}:\r) {$\ell_k$};
	\node[edgelabel,right] at ({-75 + \edge*4}:\r) {$\ell_{k+1},...,\ell_{n-2}$};
	\node[edgelabel] at ({-75 + \edge*5}:\ro) {$\ell_{n-1}$};
	\node[edgelabel] at (180:\ro /5) {$\ell_n$};

	\node[rectanglevertex] at (a) {$0$};
	\node[rectanglevertex] at (b) {$0$};
	\node[rectanglevertex] at (c) {$0$};
	\node[rectanglevertex] at (d) {$0$};
	\node[rectanglevertex] at (e) {$g_{k+1}$};
	\node[rectanglevertex] at (f) {$g_{n-1}$};
	\node[rectanglevertex] at (g) {$g_n$};
\end{scope}

\end{tikzpicture}
\caption{The function $f$ maps $\G_{k-1}$ on $(C_{n-1},L')$ to $\G_k$ on $(C_n,L)$}
\end{figure}

Define a helper function $f$ that takes a flow-up class $\G_k'$ (for $1 \leq k <n-1$) on $(C_n,L')$ and maps it to a flow-up class $\G_{k+1}$ on $(C_{n+1},L)$ by adding a leading zero. We are able to define such a function because if $\G_k'$ is a spline on $(C_n,L')$ then adding a leading zero makes it a flow-up class $\G_{k+1}$ on $(C_{n+1},L)$ by Lemma \ref{EdgeAddition}. 

We want to prove $f$ sends the smallest flow-up class $\G_k'$ on $(C_n,L')$ to the smallest flow-up class $\G_{k+1}$ on $(C_{n+1},L)$. Start with the smallest flow-up class $\G_k'$ on $(C_n,L')$. Then $f(\G_k')=\G_{k+1}$ on $(C_{n+1},L)$. Because $\G_k'$ and $\G_{k+1}$ represent essentially the same system of congruences, $\G_{k+1}$ must be the smallest flow-up class.

Now return to our spline $\y''$ on $(C_n,L')$. We have
\[ \begin{pmatrix}
	y_{n+1}'' \\
	\vdots \\
	y_3'' \\
	0 \\
\end{pmatrix}
= c_1\G_1' + c_2 \G_2' +\cdots  + c_{n-1}\G_{n-1}'. \]
By applying the function $f$ to each side of the equation we produce the flow-up class
\[ \begin{pmatrix}
	y_{n+1}'' \\
	\vdots \\
	y_3'' \\
	0 \\
	0 \\
\end{pmatrix}
= c_1 \G_2 + c_2 \G_3 + \cdots  + c_{n-1}\G_n. \]
The function $f$ sends the smallest flow-up class $\G_k'$ to $\G_{k+1}$ so $\G_2,...,\G_n$ are the smallest flow-up classes on $(C_{n+1},L)$. Now we add this expression of $\y''$ as a linear combination of $\G_2,...,\G_n$ to our original expression for $\y$. 
\[ \y=y_1 \G_0 + s \G_1 + c_1 \G_2 + c_2 \G_3 + \cdots + c_{n-1} \G_n \]
We conclude that $\y$ can be written as a linear combination of the smallest flow-up classes. Thus by induction, for any $n\geq 3$ any spline on $(C_n,L)$ is in the span of the smallest flow-up classes.

We have shown that the smallest flow-up classes are linearly independent and form a spanning set for the set of splines. Thus they form a basis for the module of splines on $(C_n,L)$. 
\end{proof}

\section{Star Graphs, Wheel Graphs and Complete Graphs}

In this section we use our knowledge of splines on cycles to identify splines on wheel and complete graphs.  We first prove  a necessary and sufficient condition for a collection of integers to form a spline on a star graph, which follows directly from the Chinese Remainder Theorem. Then, instead of addressing wheels graphs and complete graphs directly, we view them as combinations of cycles and star graphs.

Thoughout this section we use $C_n$ to denote cycle graphs on $n$ vertices, $W_n$ to denote wheel graphs on $n$ vertices, $K_n$ to denote complete graphs on $n$ vertices, and $S_n$ to denote star graphs with $n$ edges (and $n+1$ vertices).

Recall our standard convention for numbering the vertices of an $n$-cycle: we number the lowest vertex on $C_n$ as $v_1$ and number the other vertices counterclockwise. We number the vertices of a star graph $S_n$ in a similar manner: number the lowest leaf $v_1$ and number the remain leaves as $v_2,...,v_n$ going counterclockwise. Number the central vertex $v_{n+1}$.

\subsection{Star Graphs}
Consider the system of congruences in the Chinese Remainder Theorem: for $g_1,...,g_k\in \Z$ and $\ell_1,...,\ell_k \in \N$, 
\[ \begin{cases} 
	x \equiv g_1 \bmod \ell_1 \\
	\vdots \\
	x \equiv g_k \bmod \ell_k \\
\end{cases} \]
The graph representation of this system is a star graph. Thus the Chinese Remainder Theorem gives a necessary and sufficient condition for a collection of integers to form a spline on a star graph with $k$ edges. We present this formally in the next lemma.

\begin{lemma}[Existence of Splines on Star Graphs] 
\label{StarGraphs}
A collectin of integers form a spline on a star graph $S_n$ if and only if the labels on the leaves are congruent to each other modulo the greatest common divisor of the edge labels. 
\end{lemma}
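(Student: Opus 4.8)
The plan is to recognize that a spline on a star graph is nothing other than a solution to a Chinese Remainder Theorem system, so the lemma is a direct translation of the CRT into the language of edge-labeled graphs. First I would fix notation following the paper's convention: the star graph $S_n$ has leaves $v_1,\dots,v_n$ and central vertex $v_{n+1}$, where the edge $e_i$ joins the leaf $v_i$ to the center and carries the label $\ell_i$. A labeling $(g_1,\dots,g_n,g_{n+1})$ is then a spline exactly when each edge condition $g_i \equiv g_{n+1} \bmod \ell_i$ holds for $1 \leq i \leq n$. The key observation is that all $n$ of these congruences share the single unknown $g_{n+1}$: they assert precisely that the center label is a common solution $x := g_{n+1}$ to the system $x \equiv g_i \bmod \ell_i$. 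This is the system displayed just before the lemma, and its graph representation is exactly $S_n$.

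Next I would invoke the Chinese Remainder Theorem as stated in the preliminaries. It guarantees that such a common solution $x$ exists if and only if $g_i \equiv g_j \bmod \gcd(\ell_i,\ell_j)$ for every pair $1 \leq i,j \leq n$. This immediately yields both directions. For the reverse direction, if the leaf labels satisfy the pairwise congruence condition, then CRT produces an integer $x$ solving the system, and choosing $g_{n+1} = x$ completes the leaf labeling to a genuine spline on $S_n$. For the forward direction, any spline on $S_n$ has a center label $g_{n+1}$ that is by definition a common solution to the system, so CRT (or simply reducing each edge congruence) forces $g_i \equiv g_j \bmod \gcd(\ell_i,\ell_j)$ on the leaves. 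Thus the existence of a spline is equivalent to the pairwise congruence condition on the leaf labels, with no further computation required.

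The one point that needs genuine care — and where I expect the only real subtlety rather than any hard obstacle — is the precise reading of the phrase \emph{congruent to each other modulo the greatest common divisor of the edge labels}. The condition that actually drops out of the CRT is the pairwise statement $g_i \equiv g_j \bmod \gcd(\ell_i,\ell_j)$, where for each pair of leaves one uses the greatest common divisor of the two edges incident to them. I would make this explicit in the proof, because it is strictly stronger than the single congruence modulo $\gcd(\ell_1,\dots,\ell_n)$: since $\gcd(\ell_1,\dots,\ell_n)$ divides each $\gcd(\ell_i,\ell_j)$, the pairwise conditions imply the single-modulus one but not conversely, and the single-modulus version is in fact insufficient to guarantee a spline once $n \geq 3$. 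Having pinned down the intended pairwise reading, the proof is essentially a one-line appeal to the Chinese Remainder Theorem, since the star graph is literally the graphical encoding of a CRT system.
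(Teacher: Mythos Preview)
Your approach is exactly the paper's: set up the star graph's edge conditions as the CRT system $g_{n+1}\equiv g_i\bmod\ell_i$ and invoke the Chinese Remainder Theorem directly. The paper's proof is three sentences and does nothing more than this.

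Where you differ is in the care you take with the phrase ``the greatest common divisor of the edge labels.'' The paper's proof simply writes that a solution $g_{n+1}$ exists ``if and only if $g_1,\dots,g_n$ are congruent to each other modulo the greatest common divisor of the edge labels $\ell_1,\dots,\ell_n$,'' which most naturally reads as the single modulus $\gcd(\ell_1,\dots,\ell_n)$. You correctly observe that CRT actually yields the pairwise condition $g_i\equiv g_j\bmod\gcd(\ell_i,\ell_j)$, and that this is strictly stronger than the single-modulus reading once $n\ge 3$ (e.g.\ $\ell_1=2$, $\ell_2=4$, $\ell_3=3$ with leaf labels $0,1,0$: all congruent mod $\gcd(2,4,3)=1$, but no center label exists). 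So your proof is not only correct but sharper than the paper's on exactly the point where the lemma's wording is ambiguous; the paper glosses over this and your explicit remark about the intended pairwise reading is a genuine improvement.
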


\begin{proof}
Start with a star graph $S_n$ and label each edge so that $\ell_i$ is the label on the edge between vertices $v_i$ and $v_{n+1}$ for $1\leq i \leq n$. Label each vertex $v_i$ as $g_i$ with $g_i \in \Z$ for $1\leq i \leq n+1$. The set of vertex labels $g_1,...,g_{n+1}$ is a spline on $S_n$ if and only if the following system of congruences is satisfied:
\[ \begin{cases}
g_{n+1} \equiv g_i \bmod \ell_i  & \text{for } 1 \leq i \leq n \\
\end{cases} \]
By the Chinese Remainder Theorem, a solution $g_{n+1}$ exists if and only if $g_1,...,g_{n}$ are congruent to each other modulo the greatest common divisor of the edge labels $\ell_1,...,\ell_{n}$.
\end{proof}

Wheel graphs can be constructed by starting with a cycle $C_n$ and joining a single vertex $v$. In other words $W_{n+1} = C_n + v$. We can also think of this process as joining  a cycle graph $C_n$ and a star graph $S_n$ by merging the cycle's vertex $v_i$ and the star's leaf $v_i$ for $1\leq i \leq n$, as shown in Figure \ref{WheelGraph}.This is nontandard graph theory notation, but it allows us to apply what we know about splines on star graphs. 

Number the edges of $W_{n+1}$ by numbering the edges of its component parts as follows:
\begin{enumerate}
\item  Number the edges of $C_n$ so that $e_i$ is the edge between $v_i$ and $v_{i+1}$ for \newline $1\leq i \leq n-1$ and $e_n$ is  the edge between $v_1$ and $v_n$. 
\item Number the edges of $S_n$ so that $e_{n+i}$ is the edge between $v_i$ and $v_{n+1}$ for $1\leq i \leq n$. 
\end{enumerate}
Our convention for labeling the edges of $W_{n+1}$ is to label edge $e_i$ as $\ell_i$ for $1\leq i \leq n$ and $\ell_{n+i}$ for $n+1 \leq i \leq 2n$.

 Using what we know about splines on cycles and star graphs we have the following corollary for wheel graphs.

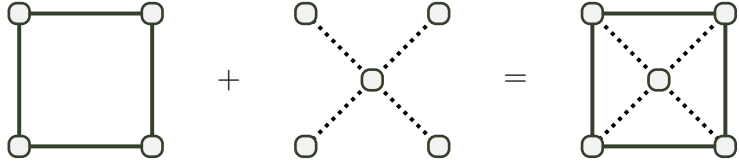
\begin{figure}
\begin{tikzpicture}
\begin{scope}
	\pgfmathsetmacro{\r}{1.25}
	
	\draw[edge] (-45:\r)--(45:\r);
	\draw[edge] (45:\r)--(135:\r);
	\draw[edge] (135:\r)--(225:\r);
	\draw[edge] (225:\r) -- (-45:\r);
	
	\node[vertex] at (-45:\r) { };
	\node[vertex] at (45:\r) { };
	\node[vertex] at (135:\r) { };
	\node[vertex] at (225:\r) { };
\end{scope}

\node[xshift=0.75in] at (0,0) {$+$};

\begin{scope}[xshift=1.5in]
	\pgfmathsetmacro{\r}{1.25}
	
	\draw[dashededge] (225:\r)--(45:\r);
	\draw[dashededge] (135:\r)--(315:\r);
	
	\node[vertex] at (0,0) { };
	\node[vertex] at (-45:\r) { };
	\node[vertex] at (45:\r) { };
	\node[vertex] at (135:\r) { };
	\node[vertex] at (225:\r) { };
\end{scope}

\node[xshift=2.25in] at (0,0) {$=$};

\begin{scope}[xshift=3in]
	\pgfmathsetmacro{\r}{1.25}
	
	\draw[edge] (-45:\r)--(45:\r);
	\draw[edge] (45:\r)--(135:\r);
	\draw[edge] (135:\r)--(225:\r);
	\draw[edge] (225:\r) -- (-45:\r);
	
	\draw[dashededge] (225:\r) -- (45:\r);
	\draw[dashededge] (-45:\r) -- (135:\r);
	
	\node[vertex] at (0,0) { };
	\node[vertex] at (-45:\r) { };
	\node[vertex] at (45:\r) { };
	\node[vertex] at (135:\r) { };
	\node[vertex] at (225:\r) { };
\end{scope}
\end{tikzpicture}
\caption{Constructing a wheel graph: $C_4 + S_4 = W_5$}
\label{WheelGraph}
\end{figure}

\begin{corollary}[Existence of Splines on Wheel Graphs]
\label{WheelGraphs}
	Construct an edge-labled wheel graph $W_{n+1}$ from a cycle graph $C_n$ and a star graph $S_n$ with $n \geq 3$.  Label each vertex $v_i$ by $g_i$ with $g_i \in \Z$. The set of vertex labels is a spline on $W_{n+1}$ if and only if the following conditions are satisfied:
\begin{enumerate}
\item  $g_i \equiv g_{i+1} \bmod \ell_i$ for $1 \leq i \leq n-1$ and $g_1 \equiv g_n \bmod \ell_n$. 

(I.e. the vertex labels $g_1,...,g_n$ satisfy the edge conditions of $C_n$)
\item $g_1,...,g_{n+1}$ are congruent to each other modulo $\gcd(\ell_{n+1},...,\ell_{2n})$. 

(I.e. the vertex labels $g_1,...,g_{n+1}$ satisfy the edge conditions of $S_n$)
\end{enumerate}
\end{corollary}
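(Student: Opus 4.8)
The plan is to exploit the decomposition that defines the wheel, namely $W_{n+1} = C_n + S_n$. Under this construction the edge set of $W_{n+1}$ is exactly the union of the cycle edges $e_1,\dots,e_n$ and the spoke edges $e_{n+1},\dots,e_{2n}$, and these two families share no edges. Since a vertex labeling is a spline precisely when the congruence across each individual edge holds, and since the edge set splits in this way, a labeling $(g_1,\dots,g_{n+1})$ is a spline on $W_{n+1}$ if and only if it simultaneously satisfies all the congruences coming from the cycle edges and all those coming from the spoke edges. The whole corollary then reduces to matching each of these two families of congruences with conditions (1) and (2).

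For the cycle edges I would first note that the central vertex $v_{n+1}$ is incident to no edge $e_i$ with $1 \le i \le n$, so the cycle congruences involve only $g_1,\dots,g_n$. By the numbering convention, $e_i$ joins $v_i$ to $v_{i+1}$ for $1 \le i \le n-1$ and $e_n$ joins $v_1$ to $v_n$, so the cycle congruences read $g_i \equiv g_{i+1} \bmod \ell_i$ for $1 \le i \le n-1$ together with $g_1 \equiv g_n \bmod \ell_n$. This is exactly condition (1); equivalently, $(g_1,\dots,g_n)$ is a spline on the cycle $C_n$ with labels $\ell_1,\dots,\ell_n$.

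For the spoke edges I would observe that $e_{n+i}$ joins the leaf $v_i$ to the center $v_{n+1}$ and carries label $\ell_{n+i}$, so the spoke congruences are precisely the defining congruences of a spline on the star graph $S_n$ whose center is $v_{n+1}$ and whose leaves are $v_1,\dots,v_n$. Applying Lemma \ref{StarGraphs} to this star, those congruences hold if and only if the labels $g_1,\dots,g_n$ are mutually congruent modulo $\gcd(\ell_{n+1},\dots,\ell_{2n})$; since each spoke congruence also forces the center label to agree with its leaf modulo that gcd, the full list $g_1,\dots,g_{n+1}$ is mutually congruent modulo $\gcd(\ell_{n+1},\dots,\ell_{2n})$, which is condition (2). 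Combining the cycle family and the spoke family then yields the claimed equivalence.

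The argument is almost entirely bookkeeping, since the substantive content (the star-graph criterion) is already available as Lemma \ref{StarGraphs}; the one thing to handle with care is the reconciliation between the fixed center label of the wheel and the Chinese-Remainder formulation used for the star. Concretely, I expect the main point to verify is that the clean split of the spline condition into a ``cycle part'' and a ``star part'' is legitimate, i.e.\ that the edge-set decomposition $E(W_{n+1}) = E(C_n) \sqcup E(S_n)$ really lets one apply the two earlier results independently, and that condition (2), phrased as a single mutual congruence of all the $g_i$ modulo $\gcd(\ell_{n+1},\dots,\ell_{2n})$, faithfully encodes the collection of spoke congruences.
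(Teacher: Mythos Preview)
Your approach is the same as the paper's: split the edge set of $W_{n+1}$ into the cycle edges and the spoke edges, match the cycle congruences to condition~(1) directly, and invoke Lemma~\ref{StarGraphs} for the star part. Your write-up is in fact more explicit than the paper's, which is quite terse.

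The concern you flag at the end is real and does not resolve the way you anticipate. Lemma~\ref{StarGraphs} says that, given leaf labels $g_1,\dots,g_n$, \emph{some} center label exists making the star a spline if and only if the leaves are pairwise congruent modulo the gcd of the spoke labels; it does not say that an arbitrary prescribed $g_{n+1}$ satisfying condition~(2) will meet each individual spoke congruence $g_{n+1}\equiv g_i\bmod\ell_{n+i}$. Indeed condition~(2) is strictly weaker than the spoke congruences: if the spoke labels $\ell_{n+1},\dots,\ell_{2n}$ are pairwise coprime then their gcd is $1$, condition~(2) becomes vacuous, yet the individual spoke congruences are certainly not. So the ``if'' direction of the corollary, read literally with $g_{n+1}$ fixed in advance, does not follow from conditions~(1) and~(2) alone. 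The paper's own proof carries exactly the same gap; it appeals to Lemma~\ref{StarGraphs} in the same way and does not address the fixed-center issue you identified.
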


\begin{proof}
Start with an edge labeled wheel graph $(W_{n+1},L)$. In order for a spline to exist on $W_{n+1}$ the system of congruences represented by the cycle and the system represented by the star must both be satisfied. We constructed in Section 4 a basis for splines on any edge-labeled cycle. A collection of integers forms a spline on the star if and only if the labels on the vertices of degree three are congruent to each other modulo the greatest common divisor of the star's edge labels by Lemma 5.1. This means a spline exists on $W_{n+1}$ if and only if the labels on the vertices of degree three simultaneously satisfy the edge conditions of the cycle and are congruent to each other modulo the greatest common divisor of the star's edge labels. 
\end{proof}

We can build complete graphs from a cycle and a series of star graphs similarly to how we built wheel graphs. Start with a cycle on three vertices. Add a star graph with three edges so that each of the leaves merge with a distinct vertex of the cycle. This produces a complete graph on four vertices. Continue in this way, as shown in Figure \ref{CompleteGraph}. We describe this by the equation
\[ K_n = C_3 + \sum_{i=3}^{n-1} S_i \]
In standard graph theory notation, this operation could be written
\[ K_n = C_3 + v_1 + v_2+ \cdots +v_n \]
but we use the nonstandard notation to exploit our knowledge of star graphs. 

We number the vertices of a complete graph by numbering the lowest vertex of $C_3$ as $v_1$ and the remaining two vertices of $C_3$ as $v_2$ and $v_3$ going counterclockwise. Each time we add a star $S_i$ we add a single vertex to the complete graph. Number this new vertex $v_i$. 

Our convention for labeling the edges of a complete graph is as follows:
\begin{enumerate}
\item Label the edges of $C_3$ so that $\ell_1$ is the label on the edge between $v_1$ and $v_2$, $\ell_2$ is the label on the edge between $v_2$ and $v_3$, and $\ell_3$ is the label on the edge between $v_1$ and $v_3$
\item For each star graph $S_i$ label the edge between $v_j$ and $v_{i+1}$ for $1\leq j \leq i$ as $\ell_{i,j}$. 
\end{enumerate}

The following corollary stems from our proofs about splines on cycles and star graphs.

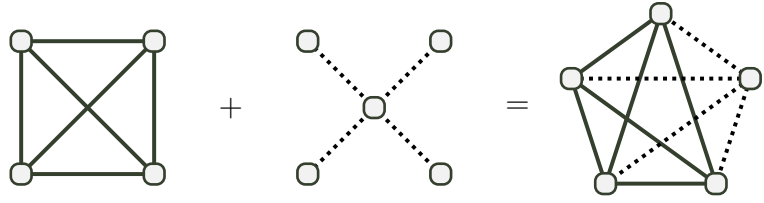
\begin{figure}
\begin{tikzpicture}
\begin{scope}
	\pgfmathsetmacro{\r}{1.25}
	
	\draw[edge] (-45:\r)--(45:\r);
	\draw[edge] (45:\r)--(135:\r);
	\draw[edge] (135:\r)--(225:\r);
	\draw[edge] (225:\r) -- (-45:\r);
	\draw[edge] (225:\r) -- (45:\r);
	\draw[edge] (-45:\r) -- (135:\r);
	
	\node[vertex] at (-45:\r) { };
	\node[vertex] at (45:\r) { };
	\node[vertex] at (135:\r) { };
	\node[vertex] at (225:\r) { };
\end{scope}

\node[xshift=0.75in] at (0,0) {$+$};

\begin{scope}[xshift=1.5in]
	\pgfmathsetmacro{\r}{1.25}
	
	\draw[dashededge] (225:\r)--(45:\r);
	\draw[dashededge] (135:\r)--(315:\r);
	
	\node[vertex] at (0,0) { };
	\node[vertex] at (-45:\r) { };
	\node[vertex] at (45:\r) { };
	\node[vertex] at (135:\r) { };
	\node[vertex] at (225:\r) { };
\end{scope}

\node[xshift=2.25in] at (0,0) {$=$};

\begin{scope}[xshift=3in]
	\pgfmathsetmacro{\r}{1.25}
	
	\draw[edge] (90:\r) -- (162:\r);
	\draw[edge] (162:\r) -- (234:\r);
	\draw[edge] (234:\r) -- (306:\r);
	\draw[edge] (90:\r) -- (234:\r);
	\draw[edge] (90:\r) -- (306:\r);	
	\draw[edge] (162:\r) -- (306:\r);
	
	\draw[dashededge] (18:\r) -- (90:\r);
	\draw[dashededge] (18:\r) -- (162:\r);
	\draw[dashededge] (18:\r) -- (234:\r);
	\draw[dashededge] (18:\r) -- (306:\r);
	
	\node[vertex] at (18:\r) { };
	\node[vertex] at (90:\r) { };
	\node[vertex] at (162:\r) { };
	\node[vertex] at (234:\r) { };
	\node[vertex] at (306:\r) { };
\end{scope}
\end{tikzpicture}
\caption{Constructing a complete graph: $K_4 + S_4 = K_5$}
\label{CompleteGraph}
\end{figure}

\begin{corollary}[Existence of Splines on Complete Graphs] 
\label{CompleteGraphs}
Construct an edge-labeled complete graph $K_{n+1}$ from a cycle graph $C_3$ and a series of star graphs $S_3,...,S_n$ with $n\geq 3$. Label each vertex $v_i$ by $g_i$ with $g_i \in \Z$. The set of vertex labels is a spline on $K_{n+1}$ if and only if the following conditions are satisfied:
\begin{enumerate}
\item the vertex labels $g_1,g_2,g_3$ satisfy the edge conditions of $C_3$.
\item for each star $S_i$ the labels on the leaves are congruent to each other modulo the greatest common divisor of the edge labels of $S_i$. I.e. $g_1,...,g_i$ are congruent to each other modulo $\gcd(\ell_1,...,\ell_i)$.
\end{enumerate}
\end{corollary}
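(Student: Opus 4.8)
The plan is to reduce the statement about complete graphs to facts we already have about cycles and star graphs, exploiting the decomposition $K_{n+1} = C_3 + \sum_{i=3}^{n} S_i$ described just before the corollary. The key observation is that a vertex labeling is a spline on $K_{n+1}$ precisely when it simultaneously satisfies every edge congruence, and the edge set of $K_{n+1}$ partitions into the edges of the base triangle $C_3$ together with the edges contributed by each star graph $S_i$ for $3 \le i \le n$. So I would argue that being a spline on the whole graph is equivalent to being a spline on each component piece at once, and then translate each piece's condition using the results already proved.

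First I would set up the equivalence carefully: a labeling $(g_1,\dots,g_{n+1})$ is a spline on $(K_{n+1},L)$ if and only if $g_u \equiv g_v \bmod \ell$ for every edge. I would then split this collection of congruences according to the decomposition. The congruences coming from the edges of $C_3$ are exactly the three triangle conditions on $g_1,g_2,g_3$, which is condition (1). For each star $S_i$ (which has central vertex $v_{i+1}$ and leaves $v_1,\dots,v_i$), the congruences are $g_{i+1} \equiv g_j \bmod \ell_{i,j}$ for $1 \le j \le i$. By Lemma \ref{StarGraphs}, such a solution for the central label is consistent exactly when the leaf labels $g_1,\dots,g_i$ are mutually congruent modulo $\gcd$ of that star's edge labels, which is condition (2). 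Since the central vertex of star $S_i$ is the newly added vertex $v_{i+1}$, and distinct stars have distinct central vertices, these conditions do not interfere with one another.

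The main subtlety — and the step I expect to require the most care — is verifying that the edges truly partition without overlap and that applying Lemma \ref{StarGraphs} to each star independently is legitimate. When we add star $S_i$, its central vertex $v_{i+1}$ is brand new, so the edges of $S_i$ are precisely the $i$ edges joining $v_{i+1}$ to the already-present vertices $v_1,\dots,v_i$; no edge of $K_{n+1}$ is counted twice and every edge not in $C_3$ belongs to exactly one star. This means the spline conditions decouple cleanly: the congruences determining each $g_{i+1}$ involve only the leaf labels $g_1,\dots,g_i$, so we may apply Lemma \ref{StarGraphs} star by star. I would conclude that the full system is solvable if and only if conditions (1) and (2) both hold, which is exactly the claim.

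Concretely, the proof runs in the forward direction by assuming $(g_1,\dots,g_{n+1})$ is a spline and reading off both conditions from the partitioned edge set, and in the reverse direction by assuming both conditions and checking every edge congruence is satisfied: the $C_3$ edges by condition (1), and each star's edges by Lemma \ref{StarGraphs} applied with condition (2). Because we have already built a full basis for cycle splines in Section 4 and characterized star splines in Lemma \ref{StarGraphs}, no new computation is needed — the entire content is the bookkeeping of the edge decomposition, exactly as in the analogous proof of Corollary \ref{WheelGraphs}.
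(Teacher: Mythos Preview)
Your proposal is correct and follows essentially the same approach as the paper: decompose $K_{n+1}$ into $C_3$ plus the stars $S_3,\dots,S_n$, observe that the edge congruences split accordingly, and then invoke the cycle results together with Lemma~\ref{StarGraphs} for each star. The paper's proof is terser and omits the explicit edge-partition bookkeeping you spell out, but the argument is the same.
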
 

\begin{proof} The proof follows quickly from our construction of the complete graph from a cycle and stars, the results in Section 3 about splines on $C_3$  and Lemma 5.1 for star graphs. 

Let $K_{n+1}$ be an edge-labeled complete graph. A spline exists on $K_{n+1}$ if and only if the systems of congruences represented by the cycle and each of the star graphs are satisfied. In Section 3  we constructed a basis for any edge-labeled cycle $C_3$. For each star $S_3,...,S_n$ we know a collection of integers froms a spline if and only if the labels on the leaves are congruent to each other modulo the greatest common divisor of the star's edge labels by Lemma 5.1.
\end{proof}

\section{Acknowledgements}
First and foremost, we would like to acknowledge the invaluable help and guidance of Julianna Tymoczko. We also offer our thanks to Elizabeth Drellich, Ruth Haas, and Lauren Rose for useful feedback. This work was partially funded by the National Science Foundation (DMS grant 1143716) and the Center for Women in Mathematics at Smith College. The 3rd author was partially supported by Smith College SURF funding.


\end{document}